\DeclareSymbolFont{cmletters}{OML}{cmm}{m}{it}                                     
\DeclareSymbolFont{cmsymbols}{OMS}{cmsy}{m}{n}
\DeclareSymbolFont{cmlargesymbols}{OMX}{cmex}{m}{n}
\DeclareMathSymbol{\myjmath}{\mathord}{cmletters}{"7C}     \let\jmath\myjmath 
\DeclareMathSymbol{\myamalg}{\mathbin}{cmsymbols}{"71}     \let\amalg\myamalg
\DeclareMathSymbol{\mycoprod}{\mathop}{cmlargesymbols}{"60}\let\coprod\mycoprod
\DeclareMathSymbol{\myalpha}{\mathord}{cmletters}{"0B}     \let\alpha\myalpha 
\DeclareMathSymbol{\mybeta}{\mathord}{cmletters}{"0C}      \let\beta\mybeta
\DeclareMathSymbol{\mygamma}{\mathord}{cmletters}{"0D}     \let\gamma\mygamma
\DeclareMathSymbol{\mydelta}{\mathord}{cmletters}{"0E}     \let\delta\mydelta
\DeclareMathSymbol{\myepsilon}{\mathord}{cmletters}{"0F}   \let\epsilon\myepsilon
\DeclareMathSymbol{\myzeta}{\mathord}{cmletters}{"10}      \let\zeta\myzeta
\DeclareMathSymbol{\myeta}{\mathord}{cmletters}{"11}       \let\eta\myeta
\DeclareMathSymbol{\mytheta}{\mathord}{cmletters}{"12}     \let\theta\mytheta
\DeclareMathSymbol{\myiota}{\mathord}{cmletters}{"13}      \let\iota\myiota
\DeclareMathSymbol{\mykappa}{\mathord}{cmletters}{"14}     \let\kappa\mykappa
\DeclareMathSymbol{\mylambda}{\mathord}{cmletters}{"15}    \let\lambda\mylambda
\DeclareMathSymbol{\mymu}{\mathord}{cmletters}{"16}        \let\mu\mymu
\DeclareMathSymbol{\mynu}{\mathord}{cmletters}{"17}        \let\nu\mynu
\DeclareMathSymbol{\myxi}{\mathord}{cmletters}{"18}        \let\xi\myxi
\DeclareMathSymbol{\mypi}{\mathord}{cmletters}{"19}        \let\pi\mypi
\DeclareMathSymbol{\myrho}{\mathord}{cmletters}{"1A}       \let\rho\myrho
\DeclareMathSymbol{\mysigma}{\mathord}{cmletters}{"1B}     \let\sigma\mysigma
\DeclareMathSymbol{\mytau}{\mathord}{cmletters}{"1C}       \let\tau\mytau
\DeclareMathSymbol{\myupsilon}{\mathord}{cmletters}{"1D}   \let\upsilon\myupsilon
\DeclareMathSymbol{\myphi}{\mathord}{cmletters}{"1E}       \let\phi\myphi
\DeclareMathSymbol{\mychi}{\mathord}{cmletters}{"1F}       \let\chi\mychi
\DeclareMathSymbol{\mypsi}{\mathord}{cmletters}{"20}       \let\psi\mypsi
\DeclareMathSymbol{\myomega}{\mathord}{cmletters}{"21}     \let\omega\myomega
\DeclareMathSymbol{\myvarepsilon}{\mathord}{cmletters}{"22}\let\varepsilon\myvarepsilon
\DeclareMathSymbol{\myvartheta}{\mathord}{cmletters}{"23}  \let\vartheta\myvartheta
\DeclareMathSymbol{\myvarpi}{\mathord}{cmletters}{"24}     \let\varpi\myvarpi
\DeclareMathSymbol{\myvarrho}{\mathord}{cmletters}{"25}    \let\varrho\myvarrho
\DeclareMathSymbol{\myvarsigma}{\mathord}{cmletters}{"26}  \let\varsigma\myvarsigma
\DeclareMathSymbol{\myvarphi}{\mathord}{cmletters}{"27}    \let\varphi\myvarphi
\theoremstyle{plain}
\newtheorem{thm}{Theorem}[section]
\newtheorem{cor}[thm]{Corollary}
\newtheorem{lemma}[thm]{Lemma}
\newtheorem{prop}[thm]{Proposition}
\newtheorem*{thm*}{Theorem}
\theoremstyle{definition}
\newtheorem*{df*}{Definition}
\newtheorem{rem}[thm]{Remark}
\newtheorem{ex}[thm]{Example}
\DeclareMathOperator{\Spec}{Spec}
\DeclareMathOperator{\Specmax}{Specmax}
\DeclareMathOperator{\Hom}{Hom}
\DeclareMathOperator{\colim}{colim}
\def\A{{\mathbb A}}
\def\C{{\mathbb C}}
\def\G{{\mathbb G}}
\def\P{{\mathbb P}}
\def\AA{{\textbf{A}}}
\def\cC{{\mathcal C}}
\def\cO{{\mathcal O}}
\def\fm{{\mathfrak m}}
\def\ev{\textup{ev}}
\def\id{\textup{id}}
\title{Schemes as functors on topological rings}
\author{Oliver Lorscheid}
\address{Instituto Nacional de Matem\'atica Pura e Aplicada, Estrada Dona Castorina 110, Rio de Janeiro, Brazil}
\email{lorschei@impa.br}
\author{Cec\'ilia Salgado}
\address{Instituto de Matem\'atica, Universidade Federal do Rio de Janeiro, Ilha do Fund\~ao, 21941-909, Rio de Janeiro, Brazil}
\email{salgado@im.ufrj.br}
\begin{document}

\begin{abstract}
Let $X$ be a scheme. In this text, we extend the known definitions of a topology on the set $X(R)$ of $R$-rational points from topological fields, local rings and ad\`ele rings to any ring $R$ with a topology. This definition is functorial in both $X$ and $R$, and it does not rely on any restriction on $X$ like separability or finiteness conditions. We characterize properties of $R$, such as being a topological Hausdorff ring, a local ring or having $R^\times$ as an open subset for which inversion is continuous, in terms of functorial properties of the topology of $X(R)$. Particular instances of this general approach yield a new characterization of adelic topologies, and a definition of topologies for higher local fields.
\end{abstract}

\maketitle


\section*{Introduction}
\label{intro}

\noindent
The aim of this text is to generalize the definition of a topology for the set $X(R)$ of $R$-rational points of a scheme $X$ from known cases of schemes $X$ and topological rings $R$ to all schemes and all topological rings. We will give a general definition and investigate its functorial properties. We show that it recovers the known topologies, and we characterize certain properties of $R$ in terms of functorial properties of the topologies on $X(R)$.

We begin with recalling the known constructions of topologies on sets of rational points.

\subsection*{The strong topology}
Let $k$ be a topological field with closed points. For the purpose of this text, we call a $k$-scheme of finite type a variety. It is a classical fact that one can endow the sets $X(k)$ of $k$-rational points with a topology in unique way for all varieties $X$ over $k$ such that the following properties hold for all varieties $U_i$, $X$, $Y$ and $Z$ over $k$ (cf.\ \cite[Ch.\ I.10]{Mumford90}).
\begin{enumerate}
 \item[(S0)] Every morphism $X\to Y$ yields a continuous map $X(k)\to Y(k)$.
 \item[(S1)] The canonical bijection $(X\times_Z Y)(k)\to X(k)\times_{Z(k)} Y(k)$ is a homeomorphism.
 \item[(S2)] The canonical bijection $\A^1(k)\to k$ is a homeomorphism.
 \item[(S3)] A closed immersion $Y\to X$ yields a closed embedding $Y(k)\to X(k)$ of topological spaces.
 \item[(S4)] An open immersion $Y\to X$ yields an open embedding $Y(k)\to X(k)$ of topological spaces.
 \item[(S5)] Given an affine open covering $\{U_i\}$ of $X$, a subset $W$ of $X(k)$ is open if and only if $W\cap U_i(k)$ is open in $U_i(k)$ for every $i$, and $X(k)=\bigcup  U_i(k)$.
\end{enumerate}
Since (S3) implies that this topology is stronger than the Zariski topology on $X(k)$, it is often called the \emph{strong topology for $X(k)$}.

Knowing that such a unique topology for the sets $X(k)$ exists, it is clear how to find an explicit description: if $X$ is a closed subscheme of $\A^n$, the set $X(k)$ inherits the subspace topology from $\A^n(k)=k^n$; if $X$ is a general scheme, then we can cover it with affine opens $U_i$ and endow $X(k)$ with the finest topology such that all the inclusions $U_i(k)\to X(k)$ are continuous.

The basic properties of $k$ that are used to show the independence of this construction of the ambient affine spaces $\A^n$ and of the covering $U_i$ of $X$ are the following:
\begin{enumerate}
 \item\label{item1} $k$ is a \emph{Hausdorff ring}, i.e.\ $k$ is a topological ring that is Hausdorff;
 \item\label{item2} $k$ is \emph{with open unit group}, i.e.\ the set $k^\times$ of units are open in $k$ and a topological group with the subspace topology;
 \item\label{item3} $k$ is a local ring.
\end{enumerate}

A proof of this can be found in \cite[Prop.\ 3.1]{Conrad12}. The conclusion is that the construction of the strong topology extends to all local Hausdorff rings $R$ with open unit group. The properties \eqref{item1}--\eqref{item3} essential to the construction of the strong topology for the following reasons. That $k$ has to be a Hausdorff ring can be easily derived from (S1)--(S3). That $k$ has to be with open unit group is dictated by (S4), which yields an open topological embedding $k^\times=\G_m(k)\to\A^1(k)=k$, and since the inversion of $k^\times$ is induced from the inversion of $\G_m$. The locality of $k$ is used to establish (S4) and (S5); namely, every point of $X(k)$ has to be contained in $U(k)$ for an affine open subscheme $U$ of $X$.

\subsection*{Weil's construction for ad\`ele rings}
One obstacle to extend the definition of the strong topology in terms of open coverings to other classes of topological rings is that (S4) and (S5) cannot be true anymore. If $R$ is not with open unit group, then the open immersion $\G_m\to\A^1$ yields an injection $R^\times=\G_m(R)\to\A^1(R)=R$, which is \emph{not} open. If $R$ is not local, then there are varieties $X$ that have $R$-rational points that are not contained in any affine open subscheme. This makes clear that we have to discard (S4) and (S5) if we want to find a definition of a topology for $X(R)$ for more general topological rings $R$ than local Hausdorff rings with open unit group.

An example of a ring whose units are not open is the ad\`ele ring $\AA$ of a global field $k$. For $k$-varieties $X$, Andr\'e Weil constructs in \cite{Weil82} a topology for $X(\AA)$ by a different method. We recall Weil's construction in the following; cf.\ also \cite{Oest84}.

We denote places of $k$ by $v$, the completion with respect to $v$ by $k_v$ and, in case of a non-archimedean place $v$, the ring of integers in $k_v$ by $\cO_v$. Note that all local fields $k_v$ are local Hausdorff rings with open unit group, and so are the rings $\cO_v$. Therefore the sets $X(k_v)$ come with the strong topology for every $k$-variety $X$, or, more generally, for every $k$-scheme $X$ of finite type.

Let $S$ be a finite set of places containing all the archimedean ones, and let $\cO_S$ be the $S$-integers in $k$. Let $X_S$ be an $\cO_S$-model of $X$, this is, an $\cO_S$-scheme $X_S$ such that $X_S\otimes_{\cO_S}k\simeq X$. If $v\notin S$, then also $X_S(\cO_v)=\Hom_{\cO_S}(\Spec\cO_v,X)$ is equipped with the strong topology. Note that for every finite type $k$-scheme $X$, there exist a finite set $S$ of places and a finite type $\cO_S$-model $X_S$ of $X$.

Let $\AA_S=\prod_{v\in S} k_v\times\prod_{v\notin S}\cO_v$ be the $S$-ad\`eles. We equip the set 
\[
 X_S(\AA_S) \quad = \quad \prod_{v\in S} \ X_S(k_v) \ \times \ \prod_{v\notin S} \ X_S(\cO_v)
\]
with the product topology, which we call the \emph{$S$-adelic topology}. For a finite set $S'$ of places that contains $S$, we denote by $X_{S'}$ the base extension of $X_S$ to the $S'$-integers $\cO_{S'}$. By \cite[8.14.2]{EGAIV3}, we have 
\[
 X(\AA) \quad = \quad \underset{{S\subset S'}}\colim \ \  X_{S'}(\AA_{S'}),
\]
which is equipped with the colimit topology, i.e.\ the finest topology such that all the maps $X_{S'}(\AA_{S'})\to X(\AA)$ are continuous. We call this topology in the following the \emph{adelic topology}.

\subsection*{Grothendieck's construction for affine schemes}
In the case of affine schemes $X=\Spec A$ over a base ring $k$, Grothendieck constructs a topology on $X(R)=\Hom_k(A,R)$ for any topological $k$-algebra $R$. Namely, we endow $X(R)$ with the coarsest topology such that for all elements $a\in A$, the evaluation map
\[
 \begin{array}{cccc} \ev_a: & \Hom_k(A,R) & \longrightarrow & R \\ & (f:A\to R) & \longmapsto & f(a) \end{array} 
\]
is continuous. We call this topology on $X(R)$ the \emph{affine topology}.

Note that this definition does neither require any compatibility of the topology of $R$ with the ring operation, nor a Hausdorff property nor open unit group nor a unique maximal ideal.

The affine topology coincides with the strong topology on $X(k)$ in case of a local Hausdorff ring $k$ with open unit group and a finite type $k$-scheme $X$. The affine topology also coincides with the adelic topology on $X(\AA)$ when $X$ is a variety over a global field $k$, cf.\ Corollary \ref{cor: affine=fine=strong=adelic for affine schemes}.

\subsection*{The fine topology}
Let $k$ be a ring, $X$ a $k$-scheme and $R$ a \emph{$k$-algebra with a topology}, by which we mean a $k$-algebra equipped with a topology that we do not assume to be compatible with the ring operations in any way. 

The \emph{fine topology on $X(R)$} is the finest topology such that any $k$-morphism $\varphi:U\to X$ from an affine $k$-scheme to $X$ induces a continuous map $\varphi_R:U(R)\to X(R)$ where $U(R)$ is considered with the affine topology.

\subsection*{Results}
In the subsequent sections of this text, we will prove the following statements. We show that the fine topology generalizes all the other concepts of topologies as explained above; namely, the fine topology is equal to the affine topology (Lemma \ref{lemma: fine=affine}), the strong topology (Corollary \ref{cor: fine=strong}) and the ($S$-)adelic topology (Theorem \ref{thm: fine=adelic}) whenever the latter topologies are defined for $X(R)$. 

We will see that the fine topology of $X(R)$ is functorial in $X$ and $R$ (Proposition \ref{prop: fine topology is functorial}), which can be seen as the generalization of property (S0) of the strong topology. We will investigate a series of further axioms for the fine topology, as explained in the following.

Let $\cC$ be a class of $k$-schemes. We say that \emph{$R$ satisfies \textup{(F1)--(F6)} for all schemes in $\cC$} if the following hypotheses are satisfied for all $X$, $Y$ and $Z$ in $\cC$ with respect to the fine topology:
\begin{enumerate}
 \item[(F1)] The canonical bijection $(X\times_ZY)(R)\to X(R)\times_{Z(R)}Y(R)$ is a homeomorphism.
 \item[(F2)] The canonical bijection $\A^1_k(R)\to R$ is a homeomorphism.
 \item[(F3)] A closed immersion $Y\to X$ of $k$-schemes yields a closed embedding $Y(R)\to X(R)$ of topolo\-gical spaces.
 \item[(F4)] An open immersion $Y\to X$ of $k$-schemes yields an open embedding $Y(R)\to X(R)$ of topological spaces.
 \item[(F5)] Let $\{U_i\}_{i\in I}$ be an affine open covering of $X$. Then $X(R)=\bigcup_{i\in I} U_i(R)$, and a subset $W$ of $X(R)$ is open if and only if $W\cap U_i(R)$ is open in $U_i(R)$ for all $i\in I$.
 \item[(F6)] Let $\{U_i\}_{i\in I}$ be a finite affine open covering of $X$ and $U=\coprod_{i\in I}U_i$. Let $\Psi:U\to X$ be the associated morphism. Then the map $\Psi_R:U(R)\to X(R)$ is surjective and open.
\end{enumerate}
While (F1)--(F5) are direct analogues of the properties (S1)--(S5) of the strong topology, (F6) is a variant of (F5) that remedies the fact that the adelic topology cannot be determined by affine open coverings. Namely, 
the ad\`ele ring $R=\AA$ of a global field $k$ satisfies (F6) for all $k$-schemes of finite type (Lemma \ref{lemma: U to X is adelic open}). The same holds true for the $S$-ad\`eles $\AA_S$ where $S$ is a finite set of places of $k$ (Lemma \ref{lemma: U to X is S-adelic open}).

It is clear from the previous discussions that the axioms (F1)--(F6) do not hold for all $k$-algebras $R$ with topology, but we will find necessary and sufficient conditions for various combinations of these axioms:

\begin{enumerate}
 \item $R$ is a topological ring if and only if (F1) and (F2) are satisfied for all $k$-schemes (of finite type) (Proposition \ref{prop: characterization of topological rings}).
 \item A topological ring $R$ is Hausdorff if and only if (F3) is satisfied for all $k$-schemes (of finite type) (Proposition \ref{prop: characterization of Hausdorff rings}).
 \item A local topological ring $R$ is with open unit group if and only if (F4) is satisfied for all $k$-schemes (of finite type) (Proposition \ref{prop: open unit group}).
 \item A ring is a local ring if and only if $X(R)=\bigcup_{i\in I} U_i(R)$ for all $k$-schemes $X$ and $U_i$ as in (F5) (Lemma \ref{lemma: characterization of local rings}).
 \item For every two closed points in $\Spec R$, there is a decomposition $\Spec R=U_1\amalg U_2$ that separates these points if and only if $\Psi_R:U(R)\to X(R)$ is surjective for all $k$-schemes $X$ and $U$ as in (F6) (Theorem \ref{thm: characterization of rings with maximally disconnected spectrum}).
\end{enumerate}

Two particular examples for the fine topology are the following. The Zariski topology of a variety $X$ over an algebraically closed field $k$ appears naturally as the fine topology of $X(k)$ if we consider $k=\A^1(k)$ together with its Zariski topology (see Example \ref{ex: Zariski topology}). A more intriguing application are higher local fields $k$, which come equipped with a natural topology for which the multiplication fails to be continuous. This defect can be fixed by substituting the natural topology by a certain finer topology, which allows affine patchings along open immersions. However, this topology differs from the fine topology on $X(k)$ coming from the natural topology on $k$. It appears that the latter topology has not been considered yet, but it might be of use in the theory of higher local fields. For more details cf.\ Example \ref{ex: higher-dimensional local fields}.

The last section of this text is concerned with locally compact topologies. Namely, if $R$ is a locally compact Hausdorff ring that satisfies (F6) for all $k$-schemes $X$ of finite type, then $X(R)$ is locally compact in the fine topology (Lemma \ref{lemma: locally compact rings with F6}). We conclude with a question about the connection between complete varieties and compact sets of $R$-rational points.

Finally, we like to mention that besides being a generalization from certain classical cases of topologies on $X(R)$ to all $k$-schemes $X$ and all $k$-algebras $R$ with a topology, the definition of the fine topology comes in a language that transfers easily to other scheme-like theories, which includes algebraic spaces and stacks, analytic spaces and tropical schemes. In particular, we make use in a subsequent work of the very same concept to define a topology on the set of tropical points of a tropical scheme, which coincides as a point set with a tropical variety in the classical sense; cf.\ \cite{Giansiracusa13}.

\subsection*{Acknowledgements}
We are indebted to Brian Conrad who showed us a proof of the equivalence of the fine topology and the adelic topology. We would like to thank Alberto C\'amara for his explanations on higher local fields, and Gunther Cornelissen for pointing out reference \cite{Oest84}. We would like to thank an anonymous referee for pointing out mistakes in a previous version.


\section{Functoriality} 
\label{section: functoriality}

\noindent
Throughout the paper, let $k$ be a ring and $R$ a $k$-algebra with a topology, which we do not assume to be compatible with the ring operations unless stated explicitly. In this section, we will show that the fine topology for $X(R)$ is functorial in $X$ and $R$. We begin with recalling the corresponding fact for the affine topology.

\begin{lemma}\label{lemma: the affine topology is functorial}
 The affine topology of $X(R)$ is functorial in both $X$ and $R$ where $X$ is an affine $k$-scheme.
\end{lemma}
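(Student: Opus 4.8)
The plan is to unwind the notion of functoriality in each of the two variables separately and, in both cases, to verify continuity of the induced map by appealing to the universal property of the affine topology. Since the affine topology is by definition the coarsest topology making all evaluation maps $\ev_a$ continuous, it is an \emph{initial} topology; hence a map into $X(R)$ is continuous precisely when its composite with each $\ev_a$ is continuous. This reduces the whole statement to two elementary identities among evaluation maps, together with the observation that the induced maps are defined by pre- and post-composition, so that compatibility with composition and identities is automatic.

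For functoriality in $X$, a morphism $X\to Y$ of affine $k$-schemes corresponds to a $k$-algebra homomorphism $g\colon B\to A$, where $X=\Spec A$ and $Y=\Spec B$, and induces the map $g^*\colon\Hom_k(A,R)\to\Hom_k(B,R)$, $f\mapsto f\circ g$. I would fix $b\in B$ and observe that $\ev_b\circ g^*=\ev_{g(b)}$, which is continuous by the very definition of the affine topology on $X(R)$. As $b$ is arbitrary, $g^*$ is continuous.

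For functoriality in $R$, a continuous $k$-algebra homomorphism $\phi\colon R\to R'$ induces the map $\phi_*\colon\Hom_k(A,R)\to\Hom_k(A,R')$, $f\mapsto\phi\circ f$. Here I would fix $a\in A$ and observe that $\ev_a'\circ\phi_*=\phi\circ\ev_a$, where $\ev_a'$ denotes evaluation into $R'$; this is a composite of the continuous map $\ev_a$ with the continuous map $\phi$, hence continuous. Again $a$ is arbitrary, so $\phi_*$ is continuous.

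I expect no genuine obstacle: the entire argument is a formal verification resting on the initiality of the affine topology. The only points demanding care are bookkeeping ones, namely keeping straight the contravariance in the algebra against the covariance in the scheme, and recognizing that the two identities $\ev_b\circ g^*=\ev_{g(b)}$ and $\ev_a'\circ\phi_*=\phi\circ\ev_a$ are precisely the naturality statements that drive the proof.
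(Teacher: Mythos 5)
Your proof is correct and follows essentially the same route as the paper: the paper checks that preimages of the subbasic opens $U_{V,a}=\ev_a^{-1}(V)$ are again of this form, which is exactly the preimage formulation of your two identities $\ev_b\circ g^*=\ev_{g(b)}$ and $\ev_a'\circ\phi_*=\phi\circ\ev_a$. Phrasing it via the universal property of the initial topology rather than via a subbasis is only a cosmetic difference.
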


\begin{proof}
 Let $X=\Spec A$. By its definition, the affine topology of $X(R)$ is generated by the open subsets
 \[
  U_{V,a} \quad = \quad \{ \, f:A\to R \, | \, f(a)\in V\, \}
 \]
 where $a\in A$ and $V$ is an open subset of $R$. It hence suffices to show that the inverse images of subsets of the form $U_{V,a}$ are open to verify the continuity of the maps in question. 

 Let $Y=\Spec B$ and $g:B\to A$ a homomorphism of $k$-algebras that corresponds to a morphism $\varphi:X\to Y$ of affine $k$-schemes. It is easily verified that $\varphi_R^{-1}(U_{V,b})=U_{V,g(b)}$, which shows that $\varphi_R:X(R)\to Y(R)$ is continuous.

 Let $g:R\to S$ be a continuous homomorphism of $k$-algebras with topologies. It is easily verified that $g_X^{-1}(U_{V,a})=U_{g^{-1}(V),a}$, which shows that $g_X:X(R)\to X(S)$ is continuous.
\end{proof}

\begin{lemma}\label{lemma: fine=affine}
 Let $X$ be an affine $k$-scheme. Then the affine and the fine topologies on $X(R)$ coincide.
\end{lemma}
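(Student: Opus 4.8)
The plan is to establish the two inclusions of topologies separately, treating the fine topology on $X(R)$ as the final topology with respect to the family of all maps $\varphi_R\colon U(R)\to X(R)$, where $U$ runs over affine $k$-schemes, $\varphi\colon U\to X$ over $k$-morphisms, and $U(R)$ carries the affine topology. Concretely, a subset $W$ of $X(R)$ is open in the fine topology if and only if $\varphi_R^{-1}(W)$ is open in $U(R)$ for every such $\varphi$.

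For the first inclusion, I would show that the affine topology on $X(R)$ already makes every test map $\varphi_R$ continuous. Since $X$ is affine and $\varphi\colon U\to X$ is a morphism of affine $k$-schemes, this is precisely the content of the functoriality of the affine topology in the first variable, namely Lemma \ref{lemma: the affine topology is functorial}. As the fine topology is the finest topology for which all the $\varphi_R$ are continuous, any topology sharing this property---in particular the affine topology---is coarser than it; hence the affine topology is contained in the fine topology.

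For the reverse inclusion I would exploit that $X$ is itself affine, so that the identity morphism $\id_X\colon X\to X$ is an admissible test morphism in the definition of the fine topology. Given any fine-open subset $W$ of $X(R)$, applying the defining condition to $\varphi=\id_X$ shows that $W=(\id_X)_R^{-1}(W)$ is open in the affine topology on $X(R)$. Thus every fine-open set is affine-open, and combined with the first inclusion the two topologies coincide.

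The argument presents no real obstacle beyond keeping the direction of \emph{finest} straight: one must check that any topology for which all the $\varphi_R$ are continuous is contained in the final topology, so that the latter is genuinely the finest such topology. The only structural inputs are Lemma \ref{lemma: the affine topology is functorial} and the observation that the affineness of $X$ places $\id_X$ among the admissible test morphisms.
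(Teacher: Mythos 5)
Your proof is correct and is essentially the paper's own argument: one inclusion follows from the functoriality of the affine topology (Lemma \ref{lemma: the affine topology is functorial}), which shows that the affine topology makes every test map $\varphi_R$ continuous and hence is coarser than the fine topology, while the reverse inclusion uses that, $X$ being affine, the identity $\id_X$ is itself an admissible test morphism. The paper phrases this second step as continuity of $\id_R\colon X(R)\to X(R)$ from the affine topology to the fine topology, which is the same observation in different words.
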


\begin{proof}
 Since $X$ is affine, the identity morphism $\id:X\to X$ yields a continuous map $\id_R:X(R)\to X(R)$ with respect to the affine topology for the domain and the fine topology for the image. This shows that the affine topology is finer than the fine topology.

 Every morphism $U\to X$ from an affine $k$-scheme $U$ to $X$ factors through the identity $\id:X\to X$, and $U(R)\to X(R)$ is continuous with respect to the affine topology for domain and image by Lemma \ref{lemma: the affine topology is functorial}. Therefore the fine topology is at least as fine as the affine topology, which completes the proof of the lemma.
\end{proof}

By virtue of this lemma, we do not have to specify the topology anymore when we are considering affine schemes. We verify the functoriality of the fine topology for $X(R)$ where $X$ can be an arbitrary $k$-scheme.

\begin{prop} \label{prop: fine topology is functorial}
 The fine topology on $X(R)$ is functorial in both $X$ and $R$.
\end{prop}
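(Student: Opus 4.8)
The plan is to exploit that the fine topology on $X(R)$ is, by definition, a \emph{final topology}: it is the finest topology making all the maps $\varphi_R\colon U(R)\to X(R)$ continuous, where $\varphi\colon U\to X$ ranges over morphisms from affine $k$-schemes and $U(R)$ carries the affine topology. Such a topology enjoys the usual universal property: a map $h\colon X(R)\to Z$ into any topological space $Z$ is continuous if and only if the composite $h\circ\varphi_R\colon U(R)\to Z$ is continuous for every affine $k$-scheme $U$ and every $\varphi\colon U\to X$. I will verify both functorialities by checking this criterion, reducing each time to the already-established functoriality of the affine topology (Lemma \ref{lemma: the affine topology is functorial}).

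For functoriality in $X$, let $\psi\colon X\to Y$ be a morphism of $k$-schemes; I want $\psi_R\colon X(R)\to Y(R)$ to be continuous for the fine topologies on source and target. By the criterion above it suffices to show that $\psi_R\circ\varphi_R$ is continuous for every affine $U$ and $\varphi\colon U\to X$. Since the formation of $R$-points is functorial in the scheme, $\psi_R\circ\varphi_R=(\psi\circ\varphi)_R$, and $\psi\circ\varphi\colon U\to Y$ is a morphism from an affine $k$-scheme to $Y$. By the very definition of the fine topology on $Y(R)$, the map $(\psi\circ\varphi)_R\colon U(R)\to Y(R)$ is continuous, which settles this case.

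For functoriality in $R$, let $g\colon R\to S$ be a continuous homomorphism of $k$-algebras with topologies; I want $g_X\colon X(R)\to X(S)$ to be continuous. Applying the criterion once more, it suffices to prove that $g_X\circ\varphi_R$ is continuous for every affine $U$ and $\varphi\colon U\to X$. Here the key point is the naturality identity
\[
 g_X\circ\varphi_R \;=\; \varphi_S\circ g_U,
\]
which one reads off by tracing a point $(\Spec R\to U)\in U(R)$ through both composites: both send it to the morphism $\Spec S\to\Spec R\to U\to X$. On the right-hand side, $g_U\colon U(R)\to U(S)$ is continuous by Lemma \ref{lemma: the affine topology is functorial} because $U$ is affine, and $\varphi_S\colon U(S)\to X(S)$ is continuous by the definition of the fine topology on $X(S)$. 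Hence the composite is continuous, and the criterion yields the continuity of $g_X$.

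I expect no genuine obstacle here: the content is entirely the universal property of the final topology together with Lemma \ref{lemma: the affine topology is functorial}. The only thing requiring care is the bookkeeping in the naturality square for the $R$-variable, that is, confirming that passing to $R$-points turns the two evident composites into the same map $U(R)\to X(S)$; this is a routine diagram chase once the directions of the arrows (precomposition with $\Spec g$ versus postcomposition with $\varphi$) are kept straight.
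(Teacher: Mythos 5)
Your proof is correct and is essentially the same as the paper's: the paper verifies continuity by checking directly that preimages of open sets pull back to open sets along every $\alpha_R\colon U(R)\to X(R)$, which is precisely the universal property of the final topology you invoke, and it uses the same two ingredients — the identity $(\psi\circ\varphi)_R=\psi_R\circ\varphi_R$ for the $X$-variable and the naturality square $g_X\circ\varphi_R=\varphi_S\circ g_U$ together with Lemma \ref{lemma: the affine topology is functorial} for the $R$-variable. The only difference is presentational (abstract universal property versus explicit open-set bookkeeping), not mathematical.
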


\begin{proof}
 Let $\varphi:X\to Y$ be a morphism of $k$-schemes and $\varphi_R:X(R)\to Y(R)$ the induced map. Let $W\subset Y(R)$ be open and $Z=\varphi_R^{-1}(W)$ its inverse image in $X(R)$. Consider a morphism $\alpha:U\to X$ from an affine $k$-scheme $U$ to $X$. Then the inverse image $\alpha_R^{-1}(Z)=(\varphi\circ\alpha)_R^{-1}(W)$ of $W$ is open in $U(R)$. Therefore $Z$ is open in $X(R)$, which shows that the fine topology of $X(R)$ is functorial in $X$.

 Let $f:R\to S$ be a continuous homomorphism of $k$-algebras with topologies and $f_X:X(R)\to X(S)$ the induced map. Let $W\subset X(S)$ be open and $Z=f_X^{-1}(W)$. Consider a morphism $\alpha:U\to X$ from an affine $k$-scheme $U$ to $X$. By Lemma \ref{lemma: the affine topology is functorial}, the homomorphism $f:R\to S$ induces a continuous map $f_U:U(R)\to U(S)$. Since $\alpha_S^{-1}(W)$ is open in $U(S)$, the inverse image $\alpha_R^{-1}(Z)=f_U^{-1}(\alpha_S^{-1}(W))$ is open in $U(R)$. This shows that the fine topology of $X(R)$ is functorial in $R$. 
\end{proof}

The following weakened version of (S3) survives in the generality of $k$-algebras with any topology and all $k$-schemes.

\begin{lemma}\label{lemma: closed immersions yield topological embeddings}
 Let $R$ be a $k$-algebra with topology and $\varphi:Y\to X$ a closed immersion of affine $k$-schemes. Then $\varphi_R:Y(R)\to X(R)$ is a topological embedding.
\end{lemma}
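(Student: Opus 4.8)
The plan is to translate the closed immersion into algebra and exploit the surjectivity of the corresponding ring map. Write $X=\Spec A$ and $Y=\Spec B$; since $\varphi$ is a closed immersion of affine schemes, it corresponds to a surjective homomorphism $g:A\to B$ of $k$-algebras, and $\varphi_R$ sends a point $f:B\to R$ to $f\circ g:A\to R$. The surjectivity of $g$ immediately yields that $\varphi_R$ is injective: if $f_1\circ g=f_2\circ g$, then $f_1$ and $f_2$ agree on $\im(g)=B$. Continuity of $\varphi_R$ is already known from Lemma \ref{lemma: the affine topology is functorial}. It therefore remains to show that $\varphi_R$ is an open map onto its image $\im(\varphi_R)$, endowed with the subspace topology from $X(R)$.

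To this end, recall from the proof of Lemma \ref{lemma: the affine topology is functorial} that the affine topology on $Y(R)$ is generated by the subbasic open sets $U_{V,b}=\{\,f:B\to R\mid f(b)\in V\,\}$ with $b\in B$ and $V\subseteq R$ open, and likewise the affine topology on $X(R)$ is generated by the sets $U_{V,a}=\{\,h:A\to R\mid h(a)\in V\,\}$ with $a\in A$. The key observation is that, choosing for a given $b\in B$ any preimage $a\in A$ with $g(a)=b$, a point $f\in Y(R)$ lies in $U_{V,b}$ if and only if $(f\circ g)(a)=f(b)\in V$, i.e.\ if and only if $\varphi_R(f)\in U_{V,a}$. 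Hence
\[
 \varphi_R(U_{V,b}) \quad = \quad \im(\varphi_R)\cap U_{V,a},
\]
so the image of every subbasic open set of $Y(R)$ is open in the subspace topology of $\im(\varphi_R)$.

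Finally I would upgrade this from subbasic opens to arbitrary opens using the injectivity of $\varphi_R$. Because $\varphi_R$ is injective, it commutes with finite intersections, $\varphi_R(W_1\cap\dots\cap W_n)=\varphi_R(W_1)\cap\dots\cap\varphi_R(W_n)$, and it always commutes with unions; since every open subset of $Y(R)$ is a union of finite intersections of the sets $U_{V,b}$, its image is a union of finite intersections of relatively open sets, and hence relatively open in $\im(\varphi_R)$. Thus $\varphi_R$ is a continuous open bijection onto its image, that is, a topological embedding.

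I expect the only subtle point to be the passage from subbasic sets to all open sets: this step genuinely uses the injectivity of $\varphi_R$ (open maps do not in general respect finite intersections), which is exactly where the surjectivity of $g$ re-enters the argument. The remaining identities are the same straightforward computations with the sets $U_{V,a}$ already carried out in Lemma \ref{lemma: the affine topology is functorial}.
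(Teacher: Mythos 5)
Your proof is correct and rests on exactly the same key computation as the paper's: lift $b\in B=A/I$ to $a\in A$ along the surjection $g:A\to B$ and match the subbasic sets $U_{V,b}$ of $Y(R)$ with the sets $U_{V,a}$ of $X(R)$. The only difference is packaging: the paper states the identity in terms of preimages, $U_{V,\bar a}=\varphi_R^{-1}(U_{V,a})$, which exhibits the affine topology on $Y(R)$ directly as the topology induced from $X(R)$ and thereby makes your final subbasis-to-arbitrary-opens step (and its appeal to injectivity, which the paper leaves implicit) unnecessary, since preimages commute with all unions and intersections.
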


\begin{proof}
 Let $X=\Spec A$ and $Y=\Spec(A/I)$ for some ideal $I$ of $A$. The basic closed subsets of $Y(R)$ are of the form
 \[
   U_{V,\bar a} \  = \  \{ \, f: A/I \to R \, | \, f(\bar a)\in V \, \}
 \]
 where $V$ is a \emph{closed} subset of $R$ and $\bar a=a+I$ with $a\in A$. It is clear that $U_{V,\bar a}=\varphi_R^{-1}(U_{V,a})$. This proves the lemma.
\end{proof}


\section{Topological rings} 
\label{section: topological rings}

\noindent
In this section, we will show that $R$ is a topological ring if and only if it satisfies the following axioms for the class of all $k$-schemes.
\begin{enumerate}
 \item[(F1)] The canonical bijection $(X\times_ZY)(R)\to X(R)\times_{Z(R)}Y(R)$ is a homeomorphism.
 \item[(F2)] The canonical bijection $\A^1_k(R)\to R$ is a homeomorphism.
\end{enumerate}

\begin{prop} \label{prop: characterization of topological rings} 
 Given a ring $k$ and a $k$-algebra $R$ equipped with a topology. Then the following are equivalent.
 \begin{enumerate}
  \item $R$ is a topological ring.
  \item $R$ satisfies (F1) and (F2) for all schemes in $\cC$ where $\cC$ can be the class of all $k$-schemes, the class of all $k$-schemes of finite type or the class of all affine schemes of finite type over $k$.
 \end{enumerate}
\end{prop}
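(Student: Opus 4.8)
The plan is to prove the equivalence as a cycle: (1) implies (F1) and (F2) for all $k$-schemes; this trivially restricts to the subclasses of finite type schemes and of affine schemes of finite type; and finally (F1) and (F2) for affine schemes of finite type imply (1). Since the last class is the smallest and the first the largest, this closes the loop for each of the three choices of $\cC$. The only substantial steps are thus the forward implication (yielding the strongest conclusion) and the backward implication (from the weakest hypothesis).

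For the backward implication, assume (F1) and (F2) for affine $k$-schemes of finite type. By (F2) the bijection $\A^1_k(R)\to R$ is a homeomorphism, and applying (F1) to $X=Y=\A^1_k$ and $Z=\Spec k$ (so that $(\Spec k)(R)$ is a single point and the fibre product is an ordinary product) identifies $\A^2_k(R)=(\A^1_k\times_k\A^1_k)(R)$ with $R\times R$ topologically. The ring operations arise from the morphisms $\add,\mult\colon\A^2_k\to\A^1_k$ attached to $t\mapsto x+y$ and $t\mapsto xy$, and negation from $\A^1_k\to\A^1_k$, $t\mapsto-t$. By functoriality of the fine topology (Proposition \ref{prop: fine topology is functorial}) these induce continuous maps on $R$-points, which under the homeomorphisms above are exactly $+$, $\cdot$ and $r\mapsto-r$. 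Hence $R$ is a topological ring, and only $\A^1_k$ and $\A^2_k$ were used.

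For the forward implication, assume $R$ is a topological ring. For (F2), identify $\A^1_k(R)=\Hom_k(k[t],R)$ with $R$ via $f\mapsto f(t)$; under this bijection the generating set $\ev_a$ becomes the polynomial map $r\mapsto a(r)$ for $a\in k[t]$. Taking $a=t$ shows the affine topology is at least as fine as the given one, while continuity of every polynomial map in a topological ring shows the given topology makes all $\ev_a$ continuous, so the two agree. For (F1) on affine schemes $X=\Spec A$, $Y=\Spec B$, $Z=\Spec C$, the canonical map $\Phi\colon\Hom_k(A\otimes_CB,R)\to\Hom_k(A,R)\times_{\Hom_k(C,R)}\Hom_k(B,R)$ is a continuous bijection by the universal property and Lemma \ref{lemma: the affine topology is functorial}. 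For the inverse, write $x\in A\otimes_CB$ as a finite sum $x=\sum_j a_j\otimes b_j$; then $\ev_x$ corresponds under $\Phi$ to $(f,g)\mapsto\sum_j f(a_j)g(b_j)$, which is continuous on the product because $R$ is a topological ring. As the source carries the initial topology with respect to the maps $\ev_x$, the map $\Phi^{-1}$ is continuous, so $\Phi$ is a homeomorphism. Since each $x$ is a finite sum, this is insensitive to finiteness hypotheses and gives (F1) for all affine $k$-schemes.

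It remains to promote (F1) from affine to arbitrary $k$-schemes $X,Y,Z$, which I expect to be the main obstacle. The map $\Phi\colon(X\times_ZY)(R)\to X(R)\times_{Z(R)}Y(R)$ is again a continuous bijection: bijectivity is the compatibility of the functor of points with fibre products, and continuity follows from Proposition \ref{prop: fine topology is functorial} applied to the two projections. What must be shown is that $\Phi$ is open, i.e.\ that the fine topology on the source is no finer than the product topology transported through $\Phi$. The difficulty is genuine: the fine topology is a final (colimit-type) topology built from affine test objects, and final topologies need not commute with products; moreover, since $R$ need not be local, an $R$-point need not factor through any affine open, so one cannot simply localise on an affine covering. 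The strategy I would pursue is to test a fine-open $W$ against tautological affine test objects: for $h_0\in W$ with $\Phi(h_0)=(f_0,g_0)$, the morphisms $f_0\colon\Spec R\to X$ and $g_0\colon\Spec R\to Y$ assemble to a map $\Spec(R\otimes_kR)\to X\times_kY$ whose $R$-points, via the affine case applied to $R\otimes_kR$, recover product neighbourhoods at $(f_0,g_0)$; combining this with a patching over a finite affine covering $\Spec R=\bigcup_sD(a_s)$ adapted to affine open covers of $X$ and $Y$ (using quasi-compactness of $\Spec R$) should produce the required product neighbourhood inside $\Phi(W)$. Executing this patching carefully, and in particular reducing a general base $Z$ to the case $Z=\Spec k$ without assuming separatedness, is where the real work lies; I expect this globalisation, rather than any affine computation, to be the crux.
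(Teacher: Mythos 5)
Your overall structure and most of your content match the paper: the cycle of implications, the backward direction via $\A^1_k$, $\A^2_k$ and Proposition \ref{prop: fine topology is functorial}, the proof of (F2) via continuity of polynomial maps, and the tensor-product argument for (F1) in the affine case (which the paper delegates to \cite[Prop.\ 2.1]{Conrad12}, including your correct observation that no finiteness hypothesis is needed) are all fine. The gap is exactly where you say ``the real work lies'': (F1) for non-affine schemes, which is needed for two of the three classes in statement (ii) (all $k$-schemes, and all $k$-schemes of finite type, which include $\P^n_k$), is never proven --- a strategy that ``should produce the required product neighbourhood'' is not a proof. Moreover, your sketch runs into concrete obstacles. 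For a general base $Z$, the point $(f_0,g_0)$ does not assemble into a morphism $\Spec(R\otimes_k R)\to X\times_Z Y$ at all: the two composites $\Spec(R\otimes_k R)\to Z$, one through $X$ and one through $Y$, disagree in general, so your tautological test object only maps to $X\times_k Y$. Reducing to $Z=\Spec k$ would require knowing that the fine topology of $(X\times_Z Y)(R)$ is the subspace topology inside $(X\times_k Y)(R)$, i.e.\ an embedding statement along an immersion of non-affine schemes; the paper's treatment of (F3) and (F4) shows that precisely such statements demand extra hypotheses on $R$ (Hausdorff, open unit group, locality), none of which are available here. Likewise, patching neighbourhoods over a finite cover $\Spec R=\bigcup_s D(a_s)$ is a statement of type (F5)/(F6), which is the subject of Sections \ref{section: local rings} and \ref{section: rings with maximally disconnected spectrum} and is not formal for non-local $R$.

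The paper's globalization step is instead a short formal argument requiring no neighbourhoods of points, no covers of $\Spec R$, and no reduction of the base $Z$. Since $\Psi:(X\times_Z Y)(R)\to X(R)\times_{Z(R)}Y(R)$ is a continuous bijection by the universal property, one must show that $\Psi$ is open. The target carries the subspace topology of $X(R)\times Y(R)$, with basic opens $\iota^{-1}(W_X\times W_Y)$ for fine-open $W_X$, $W_Y$; since the fine topologies of $X(R)$ and $Y(R)$ are, by definition, final with respect to affine test maps $\alpha:U\to X$ and $\beta:V\to Y$, the paper deduces that a subset $W$ of $X(R)\times_{Z(R)}Y(R)$ is open if and only if $(\alpha_R,\beta_R)^{-1}(W)$ is open in $U(R)\times V(R)=(U\times V)(R)$ for every morphism $(\alpha,\beta):U\times V\to X\times_Z Y$ with $U$, $V$ affine. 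For $W=\Psi(\widetilde W)$ this preimage equals $(\alpha,\beta)_R^{-1}(\widetilde W)$, which is open whenever $\widetilde W$ is fine-open, because $U\times V$ is itself an affine scheme over $X\times_Z Y$. In short, the crux is resolved by testing openness in the product against product-form affine schemes mapping to $X\times_Z Y$, not by constructing product neighbourhoods around points; this is the argument you would need to supply to complete your proposal.
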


\begin{proof}
 If $R$ satisfies (F1) and (F2) for all $k$-schemes, then this is in particular true for all $k$-schemes of finite type. If $R$ satisfies (F1) and (F2) for all $k$-schemes of finite type, then also for all affine schemes of finite type over $k$.

 Assume that $R$ satisfies (F1) and (F2) for all affine $k$-schemes of finite type. Then $\A^1(R)=R$ by (F2) and $\A^2(R)=R\times R$ by (F1). Therefore addition and multiplication, which define morphisms $\A^2\to\A^1$ of $k$-schemes, yield continuous maps $R\times R=\A^2(R)\to \A^1(R)=R$ by the functoriality of the fine topology, cf.\ Proposition \ref{prop: fine topology is functorial}. This shows that $R$ is a topological ring.

 The proposition is proven once we have shown that a topological ring $R$ satisfies (F1) and (F2) for all $k$-schemes. We begin with the proof of (F2). The canonical bijection $\ev_T: \A^1_k(R)\to R$, which sends a homomorphism $f:k[T]\to R$ to $f(T)$, is continuous since the inverse image of an open subset $V$ of $R$ is the basis open subset
 \[
  U_{V,T} \quad = \quad \{ \, f:k[T]\to R \, | \, f(T)\in V\, \}
 \]
 of $\A^1(R)$. Conversely, consider a basis open $U_{V,p}$ of $\A^1(R)$ where $V$ is an open subset of $R$ and $p=\sum c_iT^i$ is a polynomial in $k[T]$. Since the evaluation in $p$ defines a continuous map $p:R\to R$, the inverse image $p^{-1}(V)$ is open, which equals the image of $U_{V,p}=U_{p^{-1}(V),T}$ under $\ev_T$. This shows that $\ev_T$ is a homeomorphism.
 
 The proof of (F1) for affine $k$-schemes can be found in \cite[Prop.\ 2.1]{Conrad12}. Note that the proof of this fact does not require that $X$ is of finite type. We are left with showing (F1) for the class of all $k$-schemes. By the universal property of the product of topological spaces, the canonical bijection $\Psi: (X\times_Z Y)(R)\to X(R)\times_{Z(R)} Y(R)$ is continuous. We will show that $\Psi$ is open.

 Let $X$, $Y$ and $Z$ be $k$-schemes. For an open subset $\widetilde W$ of $(X\times_Z Y)(R)$, we have to show that $W=\Psi(\widetilde W)$ is open in $X(R)\times_{Z(R)} Y(R)$. The latter topological space carries the subspace topology with respect to the canonical inclusion $\iota:X(R)\times_{Z(R)}Y(R)\to X(R)\times Y(R)$. Therefore it has a basis of open subsets of the form $\iota^{-1}(W_X\times W_Y)$ where $W_X$ is open in $X(R)$ and $W_Y$ is open in $Y(R)$. 
  
 By definition, a subset $W_X$ of $X(R)$ is open if and only if $\alpha_R^{-1}(W_X)$ is open in $U(R)$ for all morphisms $\alpha:U\to X$ where $U$ is affine, and a subset $W_Y$ of $Y(R)$ is open if and only if $\beta_R^{-1}(W_Y)$ is open in $V(R)$ for all morphisms $\beta:V\to Y$ where $V$ is affine. Thus $\iota^{-1}(W_X\times W_Y)$ is open in $X(R)\times_{Z(R)} Y(R)$ if and only if $(\alpha_R,\beta_R)^{-1}(W_X\times W_Y)$ is open in $U(R)\times V(R)$ for all morphisms $(\alpha,\beta):U\times V\to X\times_Z Y$ where $U\times V$ is affine. 

 This shows that $W$ is open in $X(R)\times_{Z(R)} Y(R)$ if and only if $(\alpha_R,\beta_R)^{-1}(W)=(\alpha,\beta)_R^{-1}(\widetilde W)$ is open in $U(R)\times V(R)=(U\times V)(R)$ for all $(\alpha,\beta):U\times V\to X\times_Z Y$ where $U\times V$ is affine. But this follows from the openness of $\widetilde W$, which finishes the proof of (F1).
\end{proof}

As a consequence of Proposition \ref{prop: characterization of topological rings} and Lemma \ref{lemma: closed immersions yield topological embeddings}, we see that the fine topology of closed subschemes of an affine space coincides with the other concepts of topologies whenever they are defined.

\begin{cor}\label{cor: affine=fine=strong=adelic for affine schemes}\ 
 \begin{enumerate}
  \item Let $R=k$ be a local Hausdorff ring with open unit group and $X$ an affine $k$-scheme of finite type. Then the fine topology coincides with the strong topology for $X(k)$.
  \item Let $\AA$ be the ad\`ele ring of a global field $k$ and $X$ an affine $k$-scheme of finite type. Then the fine topology coincides with the adelic topology for $X(\AA)$.
  \item Let $k$ be a global field, $S$ a finite set of places containing the archimedean ones and $\cO_S$ the $S$-integers. Let $\AA_S$ be the $S$-ad\`eles of $k$ and $X_S$ an affine $\cO_S$-scheme of finite type. Then the fine topology coincides with the $S$-adelic topology for $X_S(\AA_S)$.\qed
 \end{enumerate}
\end{cor}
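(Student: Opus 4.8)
The plan is to reduce all three assertions to one common fact: for a topological ring $R$ and an affine $k$-scheme $X$ of finite type, the fine topology on $X(R)$ is the subspace topology coming from a closed embedding into an affine space. Indeed, in each of the three cases the ring $R$ (namely $k$, $\AA$, or $\AA_S$) is a topological ring, so Proposition \ref{prop: characterization of topological rings} applies and $R$ satisfies (F1) and (F2). Using (F2) together with (F1)---and that $(\Spec k)(R)$ is a single point, so the fibered products over it are ordinary products---one identifies $\A^n_k(R)$ with $R^n$ carrying the product topology. Since $X$ is affine of finite type, I would fix a closed immersion $X \hookrightarrow \A^n_k$; by Lemma \ref{lemma: fine=affine} the fine and affine topologies on $X(R)$ agree, and by Lemma \ref{lemma: closed immersions yield topological embeddings} the resulting map $X(R) \to \A^n_k(R) = R^n$ is a topological embedding. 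Hence the fine topology on $X(R)$ is precisely the subspace topology inherited from $R^n$, and it remains to recognize each of the three target topologies as this same subspace topology.

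For (i) this is immediate from the explicit description of the strong topology recalled in the introduction: for $X$ closed in $\A^n$ the set $X(k)$ carries the subspace topology from $\A^n(k) = k^n$, so the strong topology coincides with the fine topology. For (iii), I would use that each $k_v$ and each $\cO_v$ is a local Hausdorff ring with open unit group, so that---again by the explicit description of the strong topology---each factor $X_S(k_v)$ (resp.\ $X_S(\cO_v)$) carries the subspace topology from $k_v^n$ (resp.\ $\cO_v^n$). As the $S$-adelic topology on $X_S(\AA_S) = \prod_{v\in S} X_S(k_v) \times \prod_{v\notin S} X_S(\cO_v)$ is by definition the product topology, and a product of subspace topologies is the subspace topology of the product, the $S$-adelic topology is the subspace topology from $\prod_{v\in S} k_v^n \times \prod_{v\notin S} \cO_v^n = \AA_S^n$; this is exactly the fine topology.

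The only genuinely delicate case is (ii), where the adelic topology is defined as a colimit rather than as a subspace topology. Here I would write $X(\AA) = \colim_{S\subset S'} X_{S'}(\AA_{S'})$ and exploit that each $\AA_{S'}$ is an \emph{open} subring of $\AA$, so that $\AA_{S'}^n$ is open in $\AA^n = \A^n(\AA)$ and $\AA^n$ is the filtered union of the $\AA_{S'}^n$ along these open inclusions. One checks that $X_{S'}(\AA_{S'})$, being cut out in $\AA_{S'}^n$ by the same equations that cut out $X(\AA)$ in $\AA^n$, equals $X(\AA) \cap \AA_{S'}^n$; hence the $X_{S'}(\AA_{S'})$ form an open cover of $X(\AA)$ in the subspace topology from $\AA^n$, and on each member this subspace topology restricts to the subspace topology from $\AA_{S'}^n$, which by part (iii) is the $S'$-adelic topology. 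Since for an open cover a subset is open exactly when each of its intersections with the cover is open, the colimit topology agrees with the subspace topology from $\AA^n$; that is, the adelic topology equals the fine topology. I expect the main obstacle to lie exactly here: verifying that the filtered colimit along open immersions is computed by the subspace topology, i.e.\ that $X_{S'}(\AA_{S'})$ is genuinely open in $X(\AA)$ and that the local pieces glue, which rests on $\AA_{S'}$ being open in $\AA$ and the index set of finite $S'$ being directed.
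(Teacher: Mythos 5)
Your proposal is correct and takes essentially the same approach as the paper: the paper offers no written proof, only the remark that the corollary is a consequence of Proposition \ref{prop: characterization of topological rings} and Lemma \ref{lemma: closed immersions yield topological embeddings}, which is precisely your reduction of the fine topology to the subspace topology of $R^n$ under a closed immersion $X\hookrightarrow\A^n_k$, with the strong, $S$-adelic and adelic topologies then recognized as that same subspace topology. Your elaboration of case (ii) via the open inclusions $\AA_{S'}^n\subset\AA^n$ and the comparison of the colimit topology with the subspace topology simply fills in classical details that the paper leaves implicit.
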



\section{Hausdorff rings} 
\label{section: Hausdorff rings}

\noindent
Recall that we call a $k$-algebra $R$ with topology a \emph{Hausdorff ring} if is a topological ring that is Hausdorff as a topological space. In this section, we will show that a topological ring $R$ is a Hausdorff ring if and only if it satisfies the following axiom for the class of all $k$-schemes.
\begin{enumerate}\setcounter{enumi}{2}
 \item[(F3)] A closed immersion $Y\to X$ of $k$-schemes yields a closed embedding $Y(R)\to X(R)$ of topological spaces.
\end{enumerate}

\begin{prop} \label{prop: characterization of Hausdorff rings} 
 Given a ring $k$ and a $k$-algebra $R$ equipped with a topology. Then the following are equivalent.
 \begin{enumerate}
  \item $R$ is a Hausdorff ring.
  \item $R$ satisfies (F1)--(F3) for all schemes in $\cC$ where $\cC$ can be the class of all $k$-schemes, the class of all $k$-schemes of finite type or the class of all affine schemes of finite type over $k$.
 \end{enumerate}
\end{prop}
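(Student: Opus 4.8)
The plan is to lean on Proposition \ref{prop: characterization of topological rings}, which already pins down the content of (F1) and (F2) as the statement that $R$ is a topological ring. Since a Hausdorff ring is by definition a topological ring that is Hausdorff, the only genuinely new content is the equivalence, \emph{for a topological ring $R$}, between the Hausdorff property and axiom (F3). The reductions among the three choices of $\cC$ come for free: the classes are nested (affine finite type $\subseteq$ finite type $\subseteq$ all $k$-schemes), so (F1)--(F3) for a larger class implies the same for any smaller one. Hence it suffices to prove that (F3) for affine $k$-schemes of finite type forces $R$ to be Hausdorff, and conversely that a Hausdorff ring satisfies (F3) for \emph{all} $k$-schemes.

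For the direction (ii)$\Rightarrow$(i), I would first invoke Proposition \ref{prop: characterization of topological rings} to obtain that $R$ is a topological ring, and then detect the Hausdorff property with a single closed immersion. Consider the origin $\Spec k \to \A^1_k$, a closed immersion of affine $k$-schemes of finite type corresponding to $k[T]\twoheadrightarrow k$, $T\mapsto 0$. On $R$-points it is the inclusion of the one-point set $(\Spec k)(R)$ into $\A^1_k(R)\cong R$ (via (F2)), with image $\{0\}$. By (F3) this image is closed, so $\{0\}$ is closed in $R$; since the additive group of a topological ring is a topological group, closedness of $\{0\}$ is equivalent to the Hausdorff property, and we conclude.

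For the converse (i)$\Rightarrow$(ii), axioms (F1) and (F2) hold for all $k$-schemes by Proposition \ref{prop: characterization of topological rings}, so everything rests on (F3). I would treat the affine case first: for a closed immersion $\varphi:Y=\Spec(A/I)\to X=\Spec A$, Lemma \ref{lemma: closed immersions yield topological embeddings} already gives that $\varphi_R$ is a topological embedding, and its image $\{\,g:A\to R \mid g(I)=0\,\}=\bigcap_{a\in I}\ev_a^{-1}(\{0\})$ is an intersection of closed sets, because $R$ is Hausdorff and each $\ev_a$ is continuous. Hence $\varphi_R$ is a \emph{closed} embedding in the affine case.

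The main obstacle, and the final step, is to globalize this to an arbitrary closed immersion $\varphi:Y\to X$ of $k$-schemes. The map $\varphi_R$ is injective (a closed immersion is a monomorphism) and continuous (Proposition \ref{prop: fine topology is functorial}), so it is enough to show it is a closed map, since a continuous injective closed map is a closed embedding. To prove $\varphi_R(C)$ is closed in $X(R)$ for $C\subseteq Y(R)$ closed, I would use the defining property of the fine topology: it suffices to check that $\alpha_R^{-1}(\varphi_R(C))$ is closed in $U(R)$ for every morphism $\alpha:U\to X$ with $U$ affine. Forming the fibre product $U'=U\times_X Y$ with its projections $\psi:U'\to U$ and $\pi':U'\to Y$, the scheme $U'$ is affine and $\psi$ is a closed immersion (closed immersions are stable under base change), while the point-set identity $U'(R)=U(R)\times_{X(R)}Y(R)$ yields $\alpha_R^{-1}(\varphi_R(C))=\psi_R\big((\pi'_R)^{-1}(C)\big)$. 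Here $(\pi'_R)^{-1}(C)$ is closed by continuity of $\pi'_R$, and $\psi_R$ is a closed embedding by the affine case, so it carries this closed set to a closed set. This closes the argument. I expect the one point requiring care to be the bookkeeping of this fibre-product identity together with the stability of closed immersions into affine schemes under base change; the remainder is a direct assembly of the previously established results.
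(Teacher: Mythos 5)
Your proof is correct, and its overall architecture matches the paper's: reduce (F1)--(F2) to Proposition \ref{prop: characterization of topological rings}, detect Hausdorffness from (F3) on a single small example, prove the affine case of (F3), and globalize via the fibre product $U\times_X Y$ along morphisms $\alpha:U\to X$ with $U$ affine. The globalization step is essentially identical to the paper's (same identity $\alpha_R^{-1}(\varphi_R(C))=\psi_R\bigl((\pi'_R)^{-1}(C)\bigr)$, same base-change argument), though you are slightly more careful in noting that injectivity plus continuity plus closedness of the map yields a closed embedding, which the paper leaves implicit. You differ in two local steps, both legitimately. First, for (ii)$\Rightarrow$(i) the paper uses the diagonal $\Delta:\A^1_k\to\A^2_k$, whose image under (F1)--(F3) is the closed diagonal of $R\times R$, giving Hausdorffness directly from the definition; you instead use the origin $\Spec k\to\A^1_k$ to get $\{0\}$ closed and then invoke the topological-group fact that $T_1$ implies Hausdorff --- this works, but note it genuinely needs the topological ring structure (continuity of subtraction), whereas the diagonal argument needs only the identifications from (F1)--(F2). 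Second, for the affine case of (i)$\Rightarrow$(ii) the paper cites \cite[Prop.\ 2.1]{Conrad12}, while you derive it from Lemma \ref{lemma: closed immersions yield topological embeddings} together with the observation that the image $\bigcap_{a\in I}\ev_a^{-1}(\{0\})$ is closed when $R$ is Hausdorff; this makes the proof self-contained within the paper's own toolkit, which is a genuine (if modest) improvement in economy.
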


\begin{proof}
 We know already from Proposition \ref{prop: characterization of topological rings} that $R$ is a topological ring if and only if (F1) and (F2) hold true for any of the considered classes. It is clear that property (F3) specializes from a more general class of schemes to a more restrictive class. If (F3) holds true for all affine $k$-schemes of finite type, the diagonal $\Delta:\A^1_k\to\A^2_k$ induces a closed topological embedding $R=\A^1_k(R)\to \A^2_k(R)=R^2$, which shows that $R$ is Hausdorff.

 It is proven in \cite[Prop.\ 2.1]{Conrad12} that a Hausdorff ring $R$ satisfies (F3) for the class of all affine $k$-schemes. (Note that the finiteness assumption in \cite{Conrad12} is not used in this part of the proof). We show that (F3) for affine $k$-schemes implies (F3) for all $k$-schemes.

 Let $\varphi:Y\to X$ be a closed immersion of $k$-schemes and $Z\subset Y(R)$ a closed subset. Let $W=\varphi_R(Z)$ be the image in $X(R)$. We have to show that for every morphism $\alpha:U\to X$ from an affine $k$-scheme $U$ to $X$, the inverse image $\alpha_R^{-1}(W)$ is closed in $U(R)$. The pullback $\varphi^{\ast} U=U\times_X Y$ of $U$ along $\varphi$ is an affine $k$-scheme that comes with a morphism $\alpha':\varphi^*U\to Y$ and a closed immersion $\varphi':\varphi^*U\to U$. Since $Z$ is closed in $Y(R)$, the inverse image $(\alpha'_R)^{-1}(Z)$ is closed in $\varphi^*U$. By the result for closed embeddings of affine $k$-schemes, $\alpha_R^{-1}(W)=\varphi_R'((\alpha'_R)^{-1}(Z))$ is closed in $U(R)$. This shows that $W$ is closed in $X(R)$ and finishes the proof of property (F3).
\end{proof}


\section{Rings with open unit group}
\label{section: rings with open unit group}

\noindent
Recall that we say that a $k$-algebra $R$ with topology is \emph{with open unit group} if the subset $R^\times$ of units is open in $R$ and a topological group with respect to the subspace topology. Note that in the case of a topological ring $R$, the units $R^\times$ form a topological group with respect to the subspace topology if and only if the inversion $(-)^{-1}:R^\times\to R^\times$ is continuous. 

In this section, we will show that a local topological $k$-algebra $R$ is with open unit group if and only if it satisfies the following axiom for the class of all $k$-schemes.
\begin{enumerate}\setcounter{enumi}{3}
 \item[(F4)] An open subscheme $Y\to X$ of $k$-schemes yields an open embedding $Y(R)\to X(R)$ of topological spaces.
\end{enumerate}
The following weaker version of (F4) allows us to remove the locality assumption on $R$.
\begin{enumerate}\setcounter{enumi}{3}
 \item[(F4)*] A principal open subset $Y\to X$ of an affine $k$-scheme $X$ yields an open embedding $Y(R)\to X(R)$ of topological spaces.
\end{enumerate}

Recall the folling characterization of local rings.

\begin{lemma}\label{lemma: characterization of local rings}
 Given a ring $k$ and a $k$-algebra $R$ with a topology. Then the following are equivalent.
 \begin{enumerate}
  \item $R$ is a local ring.
  \item For every $k$-scheme $X$ and every open covering $\{U_i\}_{i\in I}$ of $X$, we have $X(R)=\bigcup_{i\in I} U_i(R)$.
 \end{enumerate}
\end{lemma}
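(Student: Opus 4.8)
The plan is to prove both implications directly from the definition of a local ring, using the key fact that an $R$-rational point of $X$ is a morphism $\Spec R\to X$, together with the characterization of local rings as those rings whose spectrum has a unique closed point. The statement says that $R$ is local precisely when every $R$-point of every $k$-scheme factors through some member of every open covering.

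For the direction (i)$\Rightarrow$(ii), suppose $R$ is local with maximal ideal $\fm$, so $\Spec R$ has the unique closed point $\fm$. Let $x\in X(R)$ correspond to a morphism $g:\Spec R\to X$. The image $g(\fm)$ is a point of $X$, and since $\{U_i\}$ covers $X$, we have $g(\fm)\in U_i$ for some $i$. The essential observation is that every point of $\Spec R$ specializes to $\fm$; more precisely, $\fm$ lies in the closure of every nonempty subset, so the only open subset of $\Spec R$ containing $\fm$ is all of $\Spec R$. Consequently $g^{-1}(U_i)$ is an open subset of $\Spec R$ containing $\fm$, hence equals $\Spec R$, so $g$ factors through $U_i$ and $x\in U_i(R)$. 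This shows $X(R)=\bigcup_{i\in I}U_i(R)$.

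For the converse (ii)$\Rightarrow$(i), I would argue contrapositively: if $R$ is not local, then $\Spec R$ has at least two distinct maximal ideals $\fm_1,\fm_2$, and I would exhibit a $k$-scheme $X$ with an open covering $\{U_i\}$ and an $R$-point not contained in any $U_i(R)$. The natural candidate is to take $X$ to be the affine line with a doubled origin, or more simply a $k$-scheme glued from two affine pieces, and to construct the point using the identity morphism $\id:\Spec R\to\Spec R$ viewed via a suitable $k$-morphism; the idea is that $\Spec R$ itself, having two closed points, cannot map into a single affine patch if the two closed points are forced into different patches. Concretely, one produces a non-affine $X$ together with a morphism $g:\Spec R\to X$ whose image meets both patches, so that $g$ does not factor through either $U_i$.

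The main obstacle will be the converse direction: constructing an explicit non-local-friendly scheme $X$ and covering that detects the non-locality of $R$. The subtlety is that one must ensure the constructed $R$-point genuinely fails to land in any single $U_i(R)$, which requires that the two distinct closed points of $\Spec R$ be separated by the covering while the morphism $g$ remains well-defined over all of $\Spec R$. The cleanest approach is likely to glue two copies of an affine scheme along an open subset in such a way that $\fm_1$ and $\fm_2$ are forced into different copies, analogous to the classical non-separated examples; verifying that the glued morphism exists and meets both patches is the crux, whereas the forward direction is essentially immediate once one recalls that the unique closed point of $\Spec R$ lies in every nonempty open set.
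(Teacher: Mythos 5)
Your forward direction is correct and is essentially the paper's own argument: the unique closed point of $\Spec R$ lies in every nonempty closed subset, hence the only open subset of $\Spec R$ containing it is $\Spec R$ itself, so any $R$-point factors through whichever $U_i$ contains the image of the closed point.

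The converse, however, has a genuine gap: you never actually produce the scheme, the covering, or the point, and you explicitly defer the crux ("constructing an explicit ... $X$ and covering"). Worse, the route you sketch is misdirected. You do not need a non-affine or non-separated scheme at all; the paper simply takes $X=\Spec R$ itself. If $R$ is not local, pick two distinct maximal ideals $\fm_1$ and $\fm_2$, and cover $X=\Spec R$ by the open subsets $U_1=\Spec R\setminus\{\fm_2\}$ and $U_2=\Spec R\setminus\{\fm_1\}$ (these are open because maximal ideals are closed points, and they cover because $\fm_1\neq\fm_2$; the lemma permits arbitrary, not necessarily affine, open coverings). The $R$-point $\id:\Spec R\to X$ then lies in neither $U_1(R)$ nor $U_2(R)$, since its image is all of $\Spec R$, which is contained in neither $U_i$. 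This is the one-line idea your proposal is missing: the non-locality is detected by the identity point, with no construction of a morphism required.

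By contrast, your doubled-origin plan can actually fail, not merely be harder. Take $k=R=\mathbb{Z}$ and let $X$ be two copies $V_1,V_2$ of $\Spec\mathbb{Z}$ glued along the complement of the point $(2)$. Since $\mathbb{Z}$ is the initial ring, any morphism $g:\Spec\mathbb{Z}\to X$ restricts on $W_i=g^{-1}(V_i)$ to the canonical open immersion $W_i\hookrightarrow\Spec\mathbb{Z}=V_i$, and compatibility on the overlap forces $(2)\notin W_1\cap W_2$; hence $g$ can hit at most one of the two doubled points, so its image always lies in one of the two patches. Thus no $\mathbb{Z}$-point of such a glued scheme witnesses the non-locality of $\mathbb{Z}$, and the "crux" you identified (producing a morphism from $\Spec R$ that genuinely meets both patches) is an obstruction your approach cannot overcome in general.
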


\begin{proof}
 If $R$ is a local ring, $X$ a $k$-scheme and $\{U_i\}_{i\in I}$ an open covering. Every morphism $\Spec R\to X$ maps the unique closed point to a scheme-theoretic point of $X$, which is contained in one of the open subschemes $U_i$. Since generalizations are mapped to generalizations and open subsets are closed under generalizations, the image of $\Spec R\to X$ must be contained in $U_i$. This shows that \eqref{item1} implies \eqref{item2}.

 Assume that $R$ is not local. Then $R$ has two distinct maximal ideals $\fm_1$ and $\fm_2$. Let $U_1$ be the complement of $\fm_2$ and $U_2$ be the complement of $\fm_1$, which are open neighbourhoods of $\fm_1$ and $\fm_2$, respectively, that cover $X=\Spec R$. Then the $R$-rational point $\id:\Spec R\to X$ is contained in neither $U_1(R)$ nor $U_2(R)$. This shows that \eqref{item2} implies \eqref{item1}.
\end{proof}

\begin{prop}\label{prop: open unit group}
 Given a ring $k$ and a $k$-algebra $R$ with a topology. Then the following are equivalent.
 \begin{enumerate}
  \item $R$ is a topological ring with open unit group.
  \item $R$ satisfies (F1), (F2) and (F4)* for all $k$-schemes in $\cC$ where $\cC$ can be the class of all $k$-schemes, the class of all $k$-schemes of finite type or the class of all affine schemes of finite type over $k$.
 \end{enumerate}
 If, in addition, $R$ is local, then $R$ satisfies (F4).
\end{prop}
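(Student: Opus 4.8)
The plan is to prove Proposition~\ref{prop: open unit group} by separating the equivalence of (i) and (ii) from the supplementary claim about (F4). For the main equivalence, I would first observe that (F1) and (F2) already characterize topological rings by Proposition~\ref{prop: characterization of topological rings}, and that (F4)* trivially specializes from a larger class of schemes to a smaller one; so the heart of the matter is to show, for a topological ring $R$, that (F4)* holds for all affine $k$-schemes of finite type if and only if $R$ is with open unit group. The natural first reduction is to test (F4)* on the single example $\G_m=\A^1\setminus\{0\}$ sitting inside $\A^1_k$: by (F2) we identify $\A^1_k(R)$ with $R$, and the principal open $\G_m(R)$ is identified with the unit group $R^\times$. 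The open immersion $\G_m\to\A^1_k$ then yields the inclusion $R^\times\to R$, and (F4)* for this pair says exactly that $R^\times$ is open in $R$. Moreover the inversion on $R^\times$ is induced by the scheme automorphism $t\mapsto t^{-1}$ of $\G_m$, which is continuous by functoriality (Proposition~\ref{prop: fine topology is functorial}); so (F4)* on $\G_m$ forces $R$ to be with open unit group. This gives one direction.

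For the converse, I would assume $R$ is a topological ring with open unit group and prove (F4)* for an arbitrary principal open $Y=D(g)\hookrightarrow X=\Spec A$ with $g\in A$. Concretely $Y=\Spec A_g=\Spec A[s]/(sg-1)$, so an element of $Y(R)$ is a homomorphism $f:A\to R$ with $f(g)\in R^\times$, and $Y(R)\to X(R)$ is the inclusion of the subset $\{f : f(g)\in R^\times\}$. This subset is precisely the preimage of the open set $R^\times$ under the evaluation map $\ev_g:X(R)\to R$, which is continuous by the definition of the affine topology; hence $Y(R)$ is open in $X(R)$. The remaining point is that the affine topology on $Y(R)$ agrees with the subspace topology inherited from $X(R)$. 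One inclusion is continuity of $Y(R)\to X(R)$, again functoriality. For the other, a basic open of the affine topology of $Y(R)$ has the form $\{f : f(a/g^n)\in V\}$ for $a\in A$, $n\ge 0$, and $V\subseteq R$ open; I would rewrite $f(a/g^n)=f(a)\cdot f(g)^{-n}$ and use that multiplication, and inversion on the open set $R^\times$, are continuous (the latter being exactly the open-unit-group hypothesis) to express this set as the intersection of $Y(R)$ with an open subset of $X(R)$. This is the step I expect to be the main obstacle, since it requires combining the openness of $R^\times$, continuity of inversion, and continuity of multiplication to handle the generators $a/g^n$ of $A_g$; it is the place where the full strength of ``with open unit group'' (not merely ``$R^\times$ open'') is used.

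Finally, for the supplementary statement, I would upgrade (F4)* to (F4) under the additional hypothesis that $R$ is local. Given a general open immersion $Y\hookrightarrow X$ of $k$-schemes, cover $Y$ by principal opens $\{D(g_i)\}$ of affine opens of $X$. By Lemma~\ref{lemma: characterization of local rings}, locality of $R$ guarantees $X(R)=\bigcup_j U_j(R)$ for any affine open cover $\{U_j\}$ of $X$, and likewise that every point of $Y(R)$ factors through one of the principal opens $D(g_i)$; this is what lets the local computation cover all of $Y(R)$. On each such principal open the already-established (F4)* shows the inclusion is an open embedding, and I would glue these local statements: openness of $Y(R)$ in $X(R)$ is checked on the covering, and the fact that $Y(R)\to X(R)$ is a homeomorphism onto its image follows because the subspace and fine topologies on $Y(R)$ agree locally on the $D(g_i)(R)$, hence globally. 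The locality of $R$ is used only here, to ensure the set-theoretic covering statements of Lemma~\ref{lemma: characterization of local rings} hold so that the local-to-global gluing is valid.
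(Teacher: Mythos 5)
Your treatment of the equivalence of (i) and (ii) is correct. The direction (ii) $\Rightarrow$ (i) is the same as the paper's: test (F4)* on $\G_m\subset\A^1_k$, and get continuity of inversion from functoriality. For the converse implication that open unit group gives (F4)*, the paper simply cites \cite[Prop.~3.1]{Conrad12}, whereas you reprove it directly: writing $Y(R)=\ev_g^{-1}(R^\times)$ gives openness of the image, and the identity $f(a/g^n)=f(a)\cdot f(g)^{-n}$, combined with continuity of multiplication and of inversion on the open set $R^\times$, identifies the affine topology on $Y(R)$ with the subspace topology. That argument is sound (it is essentially Conrad's), and making it explicit is a modest gain in self-containedness over the paper.

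The supplementary claim (local $\Rightarrow$ (F4)), however, has a genuine gap. You cover $Y$ by principal opens $D(g_i)$ of affine opens $U_j$ of $X$, apply (F4)* on each $D(g_i)\subseteq U_j$, and then assert that ``openness of $Y(R)$ in $X(R)$ is checked on the covering.'' That assertion is not available here: by definition of the fine topology, a subset $W\subseteq X(R)$ is open iff $\alpha_R^{-1}(W)$ is open in $V(R)$ for \emph{every} morphism $\alpha:V\to X$ from an \emph{arbitrary} affine $k$-scheme $V$, not merely for the inclusions of a chosen affine open cover. The statement that a cover suffices is exactly axiom (F5), which for local $R$ with (F4)* is Proposition~\ref{prop: local and F4 imply F5} --- proved \emph{after} this proposition, and by precisely the technique your sketch omits. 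Moreover, even granting the set-theoretic coverings from Lemma~\ref{lemma: characterization of local rings}, your gluing would need opens of $U_j(R)$ to produce opens of $X(R)$, i.e.\ (F4) for the affine open immersions $U_j\hookrightarrow X$, which is an instance of the very statement being proved; so as written the local-to-global step is circular. The missing idea, which is how the paper argues, is to test against an arbitrary affine morphism at once: given $\alpha:U\to X$ with $U$ affine and an open $Z\subseteq Y(R)$, form the pullback $\iota^*U=U\times_XY$, an open subscheme of $U$; cover it by principal opens $U_i$ of $U$, with restrictions $\iota'_i:U_i\to U$ and $\alpha'_i:U_i\to Y$; by Lemma~\ref{lemma: characterization of local rings} applied to $\iota^*U$ and this cover, every $R$-point of $\iota^*U$ factors through some $U_i$, whence $\alpha_R^{-1}\bigl(\iota_R(Z)\bigr)=\bigcup_i(\iota'_i)_R\bigl((\alpha'_i)_R^{-1}(Z)\bigr)$, and each term is open in $U(R)$ by (F4)* applied to $U_i\subseteq U$. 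This shows the image of an open set is open without ever invoking an unproven patching principle on $X(R)$.
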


\begin{proof}
 We know already from Theorem \ref{prop: characterization of topological rings} that $R$ is a topological ring if and only if (F1) and (F2) hold true for all $k$-schemes of finite type. If \eqref{item2} holds true, then the open immersion $\G_{m,k}\to \A^1_k$ yields an open map $R^\times=\G_{m,k}(R)\to\A^1_k(R)=R$, and $R^\times=\G_{m,k}(R)$ is a topological group since $\G_{m,k}$ is a group scheme. This shows that $R$ is with open unit group.

 It is shown in \cite[Prop.\ 3.1]{Conrad12} that a local topological ring $R$ with open unit group satisfies (F4)* for the class of all affine $k$-schemes with respect to the affine topology. Note that the mentioned restriction to $k$-schemes of finite type is unnecessary by virtue of Proposition \ref{prop: characterization of topological rings}. This establishes the first claim of the proposition.
 
 We verify the latter claim of the proposition. Let $R$ be a local topological $k$-algebra with open unit group and $\iota:Y\to X$ an open immersion of $k$-schemes. Consider an open subset $Z$ of $Y(R)$ and its image $W=\iota_R(Z)$ in $X(R)$. We have to show that $\alpha_R^{-1}(W)$ is open in $U(R)$ for every morphism $\alpha:U\to X$ from an affine $k$-scheme $U$ to $X$. 
 
 The pullback $\iota^\ast U=U\times_X Y$ of $U$ along $\iota$ is a quasi-affine scheme that comes with an open immersion $\iota':\iota^\ast U\to U$ and a morphism $\alpha':\iota^\ast U\to Y$. Let $\{U_i\}$ be an open affine cover of $\iota^\ast U$ by principal open subsets $U_i$ of $U$, and let $\iota'_i:U_i\to U$ and $\alpha'_i:U_i\to Y$ be the respective restrictions of $\iota'$ and $\alpha'$ to the $U_i$. Since $R$ is local, we have $U(R)=\bigcup U_i(R)$ by Lemma \ref{lemma: characterization of local rings}. Since $Z$ is open in $Y(R)$, the inverse image $(\alpha'_i)_R^{-1}(Z)$ is open in $U_i(R)$. Since $R$ satisfies (F4)*, as shown above, $\alpha_R^{-1}(W)=\bigcup(\iota'_i)_R((\alpha'_i)_R^{-1}(Z))$ is open in $U(R)$. This verifies (F4) for $R$ and finishes the proof of the proposition.
\end{proof}


\section{Affine patchings}
\label{section: local rings}

\noindent
In order to compare the fine topology with the strong topology, we are interested in cases where the fine topology on $X(R)$ can be defined in terms of an affine open covering. More precisely, we are interested in $k$-algebras $R$ with topology that satisfy the following axiom for all $k$-schemes $X$ in a class $\cC$.
\begin{enumerate}\setcounter{enumi}{4}
 \item[(F5)] Let $\{U_i\}_{i\in I}$ be an affine open covering of $X$. Then $X(R)=\bigcup_{i\in I} U_i(R)$, and a subset $W$ of $X(R)$ is open if and only if $W\cap U_i(R)$ is open in $U_i(R)$ for all $i\in I$.
\end{enumerate}


\begin{lemma}\label{lemma: morphisms from affine to finite type factor through an affine of finite type}
 Let $X$ be locally of finite presentation over $k$. Then every morphism $\varphi: U\to X$ from an affine $k$-scheme $U$ factors through an affine $k$-scheme $U'$ of finite type.
\end{lemma}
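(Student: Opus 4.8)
The plan is to view the morphism $\varphi$ as a $k$-rational point of $X$ valued in a large algebra and to exploit the defining feature of schemes locally of finite presentation, namely that the functor they represent commutes with filtered colimits of $k$-algebras. Write $U=\Spec A$, so that $\varphi$ corresponds to an element of $\Hom_k(\Spec A, X)$, and recall the standard fact (the affine case of \cite[8.14.2]{EGAIV3}, the same reference invoked for the adelic colimit in the introduction) that for any filtered direct system of $k$-algebras with colimit $A$, the natural map of the corresponding $\Hom$-sets into $X$ is a bijection.

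First I would present $A$ itself as such a colimit. The finitely generated $k$-subalgebras $A_\lambda$ of $A$ form a directed system under inclusion, since the subalgebra generated by two finitely generated subalgebras is again finitely generated, and their union is all of $A$; hence $A=\colim_\lambda A_\lambda$. Each $U'_\lambda=\Spec A_\lambda$ is an affine $k$-scheme of finite type, and the inclusion $A_\lambda\hookrightarrow A$ induces an affine morphism $\Spec A\to U'_\lambda$. Next I would apply the commutation property to this system. It gives a bijection
\[
 \colim_\lambda \Hom_k(\Spec A_\lambda, X) \longrightarrow \Hom_k(\Spec A, X),
\]
and in particular a surjection, so $\varphi$ is the image of some $\varphi_\lambda\in\Hom_k(\Spec A_\lambda, X)$. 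By the meaning of the colimit map, this says exactly that $\varphi$ factors as the composite $\Spec A\to U'_\lambda\xrightarrow{\varphi_\lambda} X$. Setting $U'=U'_\lambda$ and recalling that $A_\lambda$ is finitely generated over $k$ yields the asserted factorization through an affine $k$-scheme of finite type.

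The only genuine content lies in the appeal to \cite[8.14.2]{EGAIV3}, so the point to check is that its hypotheses are met: the base is $\Spec k$, the $A_\lambda$ form a genuine filtered system of $k$-algebras, and $X$ is locally of finite presentation over $k$. I expect no obstacle beyond this verification, and I emphasize that the finite presentation is a condition on $X$ only, so that finitely generated (not necessarily finitely presented) subalgebras $A_\lambda$ suffice and correctly produce a target of finite type.

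Should one prefer a self-contained argument avoiding the citation, the alternative is to cover the quasi-compact space $\Spec A$ by finitely many principal opens, each mapping into an affine open $\Spec B_j\subseteq X$ with $B_j$ of finite presentation over $k$, and then to let $A'$ be the $k$-subalgebra of $A$ generated by the pullbacks of the finitely many generators of the $B_j$ together with the coefficients of a relation $\sum_i g_i f_i=1$ witnessing that the chosen principal opens cover $\Spec A$. The remaining and only delicate step is to check that $\varphi$ descends along $\Spec A\to\Spec A'$, which is precisely the gluing that the colimit statement packages automatically; this is where I would expect the main bookkeeping to occur, and it is the reason I would favour the colimit formulation over the explicit patching.
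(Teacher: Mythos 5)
Your proof is correct and follows essentially the same route as the paper: both present $A$ as the filtered colimit of its finitely generated $k$-subalgebras $A_\lambda$, invoke \cite[8.14.2]{EGAIV3} for the bijection $\colim_\lambda \Hom_k(\Spec A_\lambda, X)\to\Hom_k(\Spec A, X)$ when $X$ is locally of finite presentation, and read off the factorization of $\varphi$ through some $\Spec A_\lambda$. Your remark that finite presentation is required only of $X$, so that finitely generated (not finitely presented) subalgebras suffice, is a correct and worthwhile clarification, but the argument itself is the paper's.
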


\begin{proof}
 Let $A$ be the $k$-algebra of global sections of $U$. The finitely generated sub-$k$-algebras $A_i$ of $A$ form an inductive system whose colimit is $A$. Consequently the spectra $U_i=\Spec A_i$, which are affine $k$-schemes of finite type, form a filtering projective system with limit $U$. By \cite[Prop.\ 8.14.2]{EGAIV3}, we have $\colim \Hom(U_i,X)=\Hom(U,X)$ if $X$ is locally of finite presentation. This shows that every morphism $U\to X$ factors through one of the $U_i$.
\end{proof}

\begin{prop}\label{prop: local and F4 imply F5}
 A local $k$-algebra $R$ with topology that satisfies (F4)* satisfies (F5) for all $k$-schemes. If $k$ is Noetherian and $R$ satisfies (F4)* for all affine $k$-schemes of finite type, then $R$ satisfies (F5) for all $k$-schemes of finite type.
\end{prop}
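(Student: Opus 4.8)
The plan is to treat the two assertions of (F5) — the covering identity $X(R)=\bigcup_{i\in I}U_i(R)$ and the gluing criterion for open sets — separately, and then to reduce the Noetherian statement to the general one by a finite-presentation argument. The covering identity is immediate: since $R$ is local, Lemma~\ref{lemma: characterization of local rings} gives $X(R)=\bigcup_{i\in I}U_i(R)$ for any open covering $\{U_i\}$ of $X$. For the gluing criterion, one implication is formal: if $W\subset X(R)$ is open, then continuity of the map $U_i(R)\to X(R)$ induced by the open immersion $U_i\to X$ (Proposition~\ref{prop: fine topology is functorial}) shows that $W\cap U_i(R)$, being the preimage of $W$, is open in $U_i(R)$. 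The content of (F5) therefore lies in the converse: assuming $W\cap U_i(R)$ is open in $U_i(R)$ for every $i$, I must show that $W$ is open in $X(R)$.

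By definition of the fine topology, this amounts to proving that $\alpha_R^{-1}(W)$ is open in $U(R)$ for every morphism $\alpha\colon U\to X$ from an affine $k$-scheme. First I would pull back the covering: the preimages $\alpha^{-1}(U_i)$ are open in the affine scheme $U$, so they can be covered by principal open subsets $V_{ij}$ of $U$; since $R$ is local, Lemma~\ref{lemma: characterization of local rings} yields $U(R)=\bigcup_{i,j}V_{ij}(R)$. The restriction of $\alpha$ to $V_{ij}$ factors as $\iota_i\circ\beta_{ij}$, where $\iota_i\colon U_i\to X$ is the inclusion and $\beta_{ij}\colon V_{ij}\to U_i$ is a morphism of affine $k$-schemes. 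A direct check, using that $\iota_i$ is a monomorphism, gives the identity $\alpha_R^{-1}(W)\cap V_{ij}(R)=(\beta_{ij})_R^{-1}\bigl(W\cap U_i(R)\bigr)$, whose right-hand side is open in $V_{ij}(R)$ because $(\beta_{ij})_R$ is continuous (Lemma~\ref{lemma: the affine topology is functorial}) and $W\cap U_i(R)$ is open by hypothesis. Now I invoke (F4)*: since $V_{ij}$ is a principal open of the affine scheme $U$, the map $V_{ij}(R)\to U(R)$ is an open embedding, so $V_{ij}(R)$ is open in $U(R)$ and carries the subspace topology. Hence each $\alpha_R^{-1}(W)\cap V_{ij}(R)$ is open in $U(R)$, and $\alpha_R^{-1}(W)=\bigcup_{i,j}\bigl(\alpha_R^{-1}(W)\cap V_{ij}(R)\bigr)$ is open as well. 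This proves (F5) for all $k$-schemes.

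For the second assertion I would first observe that when $k$ is Noetherian and $X$ is of finite type over $k$, $X$ is of finite presentation, hence locally of finite presentation, so Lemma~\ref{lemma: morphisms from affine to finite type factor through an affine of finite type} applies. I claim that under this hypothesis the fine topology on $X(R)$ is already the finest topology making all $\gamma_R\colon U'(R)\to X(R)$ continuous for $U'$ \emph{of finite type}: indeed any $\alpha\colon U\to X$ factors as $\gamma\circ\rho$ with $U'$ affine of finite type, and $\rho_R$ is continuous by Lemma~\ref{lemma: the affine topology is functorial}, so continuity against finite-type test schemes forces continuity against all affine test schemes. It then suffices to rerun the argument above using only finite-type affine test schemes $U'$; here every $U_i$ in the affine open covering, and every principal open $V_{ij}$ of $U'$ appearing in the refinement, is again affine of finite type over the Noetherian ring $k$, so the weaker hypothesis — (F4)* for affine $k$-schemes of finite type — is exactly what the gluing step requires. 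The main point to get right, and the only place where the finiteness hypotheses genuinely intervene, is this reduction to finite-type test objects via the factoring lemma; once it is in place, the set-theoretic identities and the locality/(F4)* gluing go through verbatim.
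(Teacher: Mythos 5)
Your proof is correct and follows essentially the same route as the paper's: both reduce openness of $\alpha_R^{-1}(W)$ to a covering of the affine test scheme $U$ by principal opens mapping into the $U_i$, use Lemma~\ref{lemma: characterization of local rings} for the point-set covering, affine functoriality for openness inside each $V_{ij}(R)$, (F4)* to pass to openness in $U(R)$, and Lemma~\ref{lemma: morphisms from affine to finite type factor through an affine of finite type} for the Noetherian reduction. Your double-indexed refinement $\{V_{ij}\}$ and the explicit monomorphism justification are slightly more careful versions of steps the paper treats tersely.
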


\begin{proof}
 Let $X$ be a $k$-scheme and $\{\iota_i:U_i\to X\}_{i\in I}$ an affine open covering of $X$. Consider a subset $W$ of $X(R)$. If $W$ is open in $X(R)$, then $\iota_{i,R}^{-1}(W)$ is open in $U_i(R)$ for all $i\in I$ by the definition of the fine topology.

 Assume, conversely, that $\iota_{i,R}^{-1}(W)$ is open in $U_i(R)$ for all $i\in I$. Consider a morphism $\alpha:U\to X$ from an arbitrary affine $k$-scheme $U$ to $X$. We have to show that $\alpha_R^{-1}(W)$ is open in $U(R)$. Since every morphism of $k$-schemes is locally affine, we find an affine open covering $\{V_i\}_{i\in I}$ of $U$ by principal open subsets such that $\alpha:U\to X$ restricts to morphisms $\alpha_i:V_i\to U_i$ of affine $k$-schemes. Thus $\alpha_{i,R}^{-1}\bigl(\iota_{i,R}^{-1}(W)\bigr)$ is open in $U(R)$ for all $i\in I$. By Lemma \ref{lemma: characterization of local rings}, every point of $W$ is contained in some $U_i(R)$, and therefore $\alpha_R^{-1}(W)=\bigcup_{i\in I}\alpha_{i,R}^{-1}\bigl(\iota_{i,R}^{-1}(W)\bigr)$, which is  an open subset of $U(R)$ since $R$ satisfies (F4)*. 

 If $k$ is Noetherian, then every $k$-scheme of finite type is of finite presentation. Therefore we can we use Lemma \ref{lemma: morphisms from affine to finite type factor through an affine of finite type} to reduce the above arguments to finite type schemes $U$ in case that $R$ satisfies (F4) only for $k$-schemes of finite type. The same proof shows that $R$ satisfies (F5) for all $k$-schemes of finite type.
\end{proof}

Since the axioms (F1)--(F5) determine the fine topology for $k$-schemes of finite type, this shows at once the equivalence of the fine topology with the strong topology in case of a local Hausdorff ring $R=k$ with open unit group.

\begin{cor}\label{cor: fine=strong}
 Let $k$ be a local Hausdorff ring with open unit group and $X$ a $k$-scheme of finite type. Then the fine topology of $X(k)$ coincides with strong topology, and $k$ satisfies (F1)--(F5) for all $k$-schemes of finite type.\qed
\end{cor}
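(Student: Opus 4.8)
The plan is to prove the result in two stages: first I would check that $k$ satisfies each of the axioms (F1)--(F5) for $k$-schemes of finite type, drawing on the characterizations established in the preceding sections, and then I would deduce that the fine topology agrees with the strong topology by combining the affine case with the patching axiom (F5).

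For the first stage, I would verify the five axioms one at a time. Since a local Hausdorff ring with open unit group is in particular a topological ring, Proposition \ref{prop: characterization of topological rings} yields (F1) and (F2). Since $k$ is moreover Hausdorff, Proposition \ref{prop: characterization of Hausdorff rings} yields (F3). Since $k$ is a local topological ring with open unit group, Proposition \ref{prop: open unit group} yields (F4), and in particular its weaker form (F4)*. Finally, since $k$ is local and satisfies (F4)*, Proposition \ref{prop: local and F4 imply F5} yields (F5). Together with the functoriality of the fine topology (Proposition \ref{prop: fine topology is functorial}), which plays the role of (S0), this shows that the fine topology satisfies the complete list of axioms characterizing the strong topology.

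For the second stage, I would argue that the two topologies coincide. On affine $k$-schemes of finite type, Corollary \ref{cor: affine=fine=strong=adelic for affine schemes}(i) already identifies the fine and strong topologies. For a general finite-type $X$, I would fix an affine open covering $\{U_i\}$ and observe that both topologies characterize an open subset $W$ of $X(k)$ by the openness of the traces $W\cap U_i(k)$ in $U_i(k)$ --- for the fine topology by the axiom (F5) just established, and for the strong topology by its defining property (S5). Since the induced topologies on each affine piece $U_i(k)$ coincide, the two notions of openness on $X(k)$ agree, so the fine and strong topologies are equal. Alternatively, one may simply invoke the uniqueness of the strong topology, since the fine topology satisfies its entire defining axiom system.

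I do not expect a serious obstacle here: the substantive work has already been carried out in the cited propositions, and the corollary is essentially an assembly of them. The only points demanding attention are checking that each cited result applies under the present hypotheses --- in particular that (F4)* is in hand before invoking Proposition \ref{prop: local and F4 imply F5} --- and confirming that the uniqueness of the strong topology is available in the generality of local Hausdorff rings with open unit group, and not merely for topological fields.
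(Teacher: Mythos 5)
Your proposal is correct and follows essentially the same route as the paper: the corollary is an assembly of Propositions \ref{prop: characterization of topological rings}, \ref{prop: characterization of Hausdorff rings}, \ref{prop: open unit group} and \ref{prop: local and F4 imply F5} to obtain (F1)--(F5), after which the fine and strong topologies agree because they coincide on affines (Corollary \ref{cor: affine=fine=strong=adelic for affine schemes}) and both are determined by the same patching axiom ((F5), respectively (S5)) --- which is exactly what the paper means by ``the axioms (F1)--(F5) determine the fine topology.'' Your closing concern is already settled in the introduction, where the uniqueness of the strong topology is noted (via \cite[Prop.\ 3.1]{Conrad12}) to hold for all local Hausdorff rings with open unit group, not just topological fields.
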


\begin{ex}[Zariski topology]\label{ex: Zariski topology}
 Let $k$ be a field with the topology of finite closed subsets. Then the fine topology for $X(k)$ for a $k$-scheme $X$ of finite type is equal to the Zariski topology. This can be seen as follows. 

 In the affine case $X=\Spec\bigl(k[T_1,\dotsc,T_n]/I\bigr)$, a basic closed subset is of the form
 \[
  U_{\overline P,\{c\}} \ = \ \{\, f:k[T_1,\dotsc,T_n]/I \to k \, | \, f(\overline P)\in\{c\}\, \}
 \]
 where $\overline P=P+I$ is the class of a polynomial $P\in k[T_1,\dotsc,T_n]$ and $c\in k$. If $a_i=f(T_i)$, then $f(\overline P)=P(a_1,\dotsc,a_n)$. Thus $f(\overline P)\in\{c\}$ means that $P(a_1,\dotsc,a_n)-c=0$, and $U_{\overline P,c}$ corresponds with the set $V(P-c)(k)$ of $k$-rational points of the vanishing set of $P-c$. If, conversely, $V(J)$ is a closed subscheme of $X$, defined by an ideal $J$, then
 \[
  V(J)(k) \ = \ \bigcap_{\overline P\in J} \, U_{\overline P,\{0\}}.
 \]
 This shows that the fine topology is equal to the Zariski topology for affine $k$-schemes of finite type.

 Since a morphism $U\to X$ of $k$-schemes of finite type induces a continuous map $U(k)\to X(k)$ with respect to Zariski topology, the fine topology is finer than the Zariski topology. Since the Zariski topology is the finest topology such that the inclusions $U_i\to X$ for a fixed affine open covering $\{U_i\}$ of $X$ yield continuous maps $U_i(k)\to X(k)$, the fine topology for $X(k)$ is indeed equal to the Zariski topology.

 Note that if $k$ is infinite, it is neither a topological ring nor Hausdorff nor with an open unit group. The only property that remains valid from the ones that we are investigating in this text is that $k^\times$ is open in $k$. However, $k$ satisfies axioms (S2)--(S5) for all $k$-schemes of finite type; only (S1) does not hold.
\end{ex}


\begin{ex}[The patchwork topology]\label{ex: patchwork topology}
 Let $X$ be a $k$-scheme of finite type. If $R$ is a Hausdorff ring with open unit group, i.e. $R$ satisfies (F1)--(F4) for all $k$-schemes, then we could attempt to define a \emph{patchwork topology} for $X(R)$: we say that $W\subset X(R)$ is patchwork open if and only if $W\cap U(R)$ is affine open in $U(R)$ for all affine open subschemes $U$ of $X$. Without requiring $R$ to be local, this definition entails the following problems.

 First of all, the patchwork topology cannot be defined by a single fixed covering $\{U_i\}$ of $X$ since there might be $R$-rational points that are not contained in any of the $U_i(R)$, but in $U(R)$ for some affine open $U$ of $X$ that is not among the $U_i$. This ambiguity can be resolved by considering the maximal atlas $\{U_i\}$ of all affine open subschemes of $X$.

 The more serious problem is the following. Let $k$ be a field. Then there are varieties $X$ over $k$ with pairs of $k$-rational points $\alpha$ and $\beta$ that are not contained commonly in any affine subset of $X$. Arguably the most prominent example of such a variety is Hironaka's threefold, see \cite{Hironaka62}. 

 Let $R=k\times k$ together with a topology. If, for instance, $k$ is a topological field that is Hausdorff and $R$ has the product topology, then $R$ is a Hausdorff ring with open unit group. The $R$-rational points of $X$ are pairs of $k$-rational points. By the mentioned property of $X$ with respect to the $k$-rational points $\alpha$ and $\beta$, the $R$-rational point $(\alpha,\beta)$ is not contained in $U(R)$ for any affine open subscheme $U$ of $X$. By the definition of the patchwork topology, the point $(\alpha,\beta)$ is thus an isolated point in $X(R)$.

 By Chow's lemma, there exists a birational map $\P^n\to X$ for some $n\geq 0$. Unless $R$ is discrete, $\P^n(R)$ does not have isolated components. Therefore the induced map $\P^n(R)\to X(R)$ cannot be continuous in the patchwork topology.
\end{ex}


\begin{ex}[Higher local fields]\label{ex: higher-dimensional local fields}
 A $1$-dimensional local field is, per definition, a non-archimedean local field. An $n$-dimensional local field is defined as the fraction field of a complete discrete valuation ring whose residue field is an $(n-1)$-dimensional local field. Every $n$-dimensional local field is equipped with an \emph{intrinsic topology}, which extends the topology of its residue field in a non-trivial way. In particular, it is strictly finer that the topology coming from the discrete valuation of the higher local field. See \cite{Camara11} for a definition. 

 For $n>1$, however, the multiplication fails to be continuous, and it is for this reason that the definition in terms of open affine coverings cannot be applied to higher local fields. 

 C\'amara considers in \cite{Camara11} the \emph{sequential topology on $k$}, which is a certain finer topology on $k$ than the intrinsic topology and which depends on an additional choice of a system of parameters. C\'amara defines the \emph{sequential topology on $X(k)$} for affine $k$-schemes $X$ in the usual way. Since open immersions $Y\to X$ yield open embeddings $Y(k)\to X(k)$ in the sequential topology, it is possible to extend this definition to arbitrary $k$-schemes $X$ in terms of open coverings.

 By virtue of Proposition \ref{prop: local and F4 imply F5}, the sequential topology of $X(k)$ is the same as the fine topology on $X(k)$ associated with the sequential topology on $k$.

 The sequential topology is strictly finer than the \emph{intrinsic topology for $X(k)$}, by which we mean the fine topology on $X(k)$ associated with the intrinsic topology on $k$. To our best knowledge, the intrinsic topology on $X(k)$ has not been considered before, and it might be helpful in the study of varieties over higher local fields.
\end{ex}


\section{Rings with maximally disconnected spectrum}
\label{section: rings with maximally disconnected spectrum}

\noindent
In this section, we consider a weaker version of axiom (F5) that still allows us to deduce the topology of $X(R)$ from an affine open covering of $X$, but that applies to a wider class of rings than (F5). In particular, we will show in section \ref{section: the adelic topology} that the ad\`ele ring of a global field satisfies the following axiom.

Namely, we are interested in $k$-algebras $R$ with topology that satisfy the following axiom for all $k$-schemes $X$ and $U_i$ in a class $\cC$.
\begin{enumerate}\setcounter{enumi}{5}
 \item[(F6)] Let $\{U_i\}_{i\in I}$ be a finite affine open covering of $X$ and $U=\coprod_{i\in I}U_i$. Let $\Psi:U\to X$ be the associated morphism. Then the map $\Psi_R:U(R)\to X(R)$ is surjective and open.
\end{enumerate}

Note that (F6) implies the following description of open subsets of $X(R)$ with respect to an arbitrary affine open covering of $X$.

\begin{lemma}
 Suppose that $R$ satisfies (F6) for all $k$-schemes. Let $\{\iota_i:U_i\to X\}_{i\in I}$ be an affine open covering of $X$ and denote by $\Psi_J:U_J\to X$ the induced morphism from the disjoint union $U_J=\coprod_{i\in J}U_i$ to $X$ where $J$ is a finite subset of $I$. Then a subset $W$ of $X(R)$ is open if and only if $\Psi_{J,R}^{-1}(W)$ is open in $U_J(R)$ for every finite subset $J$ of $I$.
\end{lemma}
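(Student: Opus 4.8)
The plan is to prove the two implications separately, the forward one being immediate from functoriality and the backward one requiring a quasi-compactness reduction together with axiom (F6). For the forward implication, suppose $W$ is open in $X(R)$. Each $\Psi_J\colon U_J\to X$ is a morphism of $k$-schemes, so by the functoriality of the fine topology (Proposition~\ref{prop: fine topology is functorial}) the induced map $\Psi_{J,R}\colon U_J(R)\to X(R)$ is continuous, and hence $\Psi_{J,R}^{-1}(W)$ is open in $U_J(R)$. This direction needs nothing beyond functoriality.

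For the backward implication, I would assume that $\Psi_{J,R}^{-1}(W)$ is open in $U_J(R)$ for every finite $J\subset I$, and show that $W$ is open in $X(R)$. By the definition of the fine topology it suffices to verify that $\alpha_R^{-1}(W)$ is open in $V(R)$ for every morphism $\alpha\colon V\to X$ from an affine $k$-scheme $V$. The key observation is that $V$ is quasi-compact, so the open covering $\{\alpha^{-1}(U_i)\}_{i\in I}$ of $V$ admits a finite subcovering indexed by some finite $J\subset I$. Consequently the set-theoretic image of $\alpha$ lies in the open subscheme $X_J=\bigcup_{i\in J}U_i$ of $X$, and $\alpha$ factors as $\alpha=j_J\circ\alpha'$, where $j_J\colon X_J\to X$ is the open immersion and $\alpha'\colon V\to X_J$.

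Now $\{U_i\}_{i\in J}$ is a finite affine open covering of $X_J$, so I would apply (F6) to $X_J$. Writing $\Phi_J\colon U_J\to X_J$ for the induced morphism, so that $\Psi_J=j_J\circ\Phi_J$, axiom (F6) tells us that $\Phi_{J,R}\colon U_J(R)\to X_J(R)$ is surjective and open. The hypothesis gives that $\Psi_{J,R}^{-1}(W)=\Phi_{J,R}^{-1}\bigl(j_{J,R}^{-1}(W)\bigr)$ is open in $U_J(R)$. Pushing this open set forward along the surjective open map $\Phi_{J,R}$ and using $\Phi_{J,R}\bigl(\Phi_{J,R}^{-1}(S)\bigr)=S$ for surjections shows that $j_{J,R}^{-1}(W)$ is open in $X_J(R)$. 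Finally $\alpha_R^{-1}(W)=(\alpha'_R)^{-1}\bigl(j_{J,R}^{-1}(W)\bigr)$ is open in $V(R)$, since $\alpha'_R$ is continuous by Proposition~\ref{prop: fine topology is functorial} (recall that the fine and affine topologies on $V(R)$ agree by Lemma~\ref{lemma: fine=affine}). This establishes that $W$ is open in $X(R)$.

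The hard part is purely the handling of an infinite index set $I$: axiom (F6) is only available for finite coverings, and the device that makes it applicable is the quasi-compactness of the affine test scheme $V$, which forces every $\alpha\colon V\to X$ to land inside a finite subunion $X_J$. Once this reduction is in place, the surjectivity of $\Phi_{J,R}$ (to transport openness from $U_J(R)$ down to $X_J(R)$) and the openness of $\Phi_{J,R}$ supply the rest, and the final descent to $V(R)$ is just functoriality.
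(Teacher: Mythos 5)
Your proof is correct and follows essentially the same route as the paper's: the forward direction by functoriality, and the backward direction by using quasi-compactness of the affine test scheme $V$ to land in a finite subunion $X_J=\bigcup_{i\in J}U_i$, then applying (F6) to $X_J$ to push openness from $U_J(R)$ down to $X_J(R)$, and finishing by functoriality. The only difference is that you spell out the factorization $\Psi_J=j_J\circ\Phi_J$ and the surjective-open pushforward step explicitly, which the paper leaves implicit.
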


\begin{proof}
 Clearly the inverse image of an fine open $W$ of $X(R)$ is open in any $U_J(R)$. Assume conversely that the inverse image of a subset $W$ of $X(R)$ is open in $U_J(R)$ for any finite subset $J$ of $I$. We will show that $W$ is fine open. For this purpose, let $\alpha:V\to X$ be a morphism from an affine $k$-scheme $V$ to $X$. Since $V$ is compact, the image of $\alpha$ is contained in the union $X_I=\bigcup_{i\in J} U_i$ of finitely many $U_i$. Therefore $\alpha_R^{-1}(W)=\alpha_R^{-1}(W')$ where $W'=W\cap X_I(R)$. For $W'$ and the finite covering $\{U_i\}_{i\in J}$ of $X_I$, we can apply axiom (F6) to conclude that $W'$ is fine open in $X_I(R)$ and thus $\alpha_R^{-1}(W)$ open in $V(R)$.
\end{proof}

The surjectivity of $\Psi_R:U(R)\to X(R)$ in axiom (F6) depends on a purely algebraic property of $R$. We will investigate this in the following.

We consider the maximal spectrum $\Specmax R$ of maximal ideals of $R$ as a topological subspace of $\Spec R$. We say that a ring $R$ is \emph{with maximally disconnected spectrum} if every two closed points of $\Spec R$ have respective open neighbourhoods $U_1$ and $U_2$ such that $\Spec R=U_1\amalg U_2$ as a topological space. 

Examples of rings with maximally disconnected spectrum are local rings or products of local rings, as can be deduced from condition \eqref{item3} in the following theorem. For the same reason, ad\`ele rings are with maximally disconnected spectrum. 

\begin{rem}
 The spectrum of a ring with maximally disconnected spectrum is as close to a totally disconnected space as it can be. Namely, if $x$ is a specialization of $y$, then $y$ is contained in every open neighbourhood of $x$. Therefore, the spectrum cannot be totally disconnect if it is not equal to the maximal spectrum, i.e.\ its subspace of closed points. 

 Note that the maximal spectrum of a ring with maximally disconnected spectrum is totally disconnected, but that the converse implication does not hold. To wit, a semilocal ring with connected spectrum and more than one closed point has a totally disconnected maximal spectrum, but it is not with maximally disconnected spectrum.
\end{rem}

\begin{thm}\label{thm: characterization of rings with maximally disconnected spectrum}
 The following conditions on $R$ are equivalent.
 \begin{enumerate}
  \item $R$ is with maximally disconnected spectrum.
  \item Let $X$ be a $k$-scheme with a finite affine open covering $\{U_i\}$. Let $U=\coprod U_i$ and $\Psi:U\to X$ the induced morphism. Then $\Psi_R:U(R)\to X(R)$ is surjective.
  \item For every equality in $R$ of the form $1=h_1+\dotsb+h_n$, there are idempotent elements $e_i\in (h_i)$ such that $1=e_1+\dotsb+e_n$.
 \end{enumerate}
\end{thm}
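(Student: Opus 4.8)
The plan is to prove the equivalence by establishing the cycle (i)$\Rightarrow$(iii)$\Rightarrow$(ii)$\Rightarrow$(i), reducing the geometric statement (ii) to the purely algebraic condition (iii) via the standard affine covering of $\P^1_R$. The key observation is that an $R$-rational point $x:\Spec R\to X$ lifts along $\Psi$ to a point of $U(R)$ if and only if the scheme-theoretic image of $x$ is contained in a single $U_i$, equivalently, if and only if $\Spec R$ decomposes as a disjoint union of opens each landing in some $U_i$. The bridge to (iii) is that such decompositions of $\Spec R$ correspond exactly to complete systems of orthogonal idempotents, and that the open cover $x^{-1}(U_i)$ of $\Spec R$ pulled back from $\{U_i\}$ gives rise to an equation $1=h_1+\dots+h_n$ where $(h_i)$ is the ideal cutting out the complement of $x^{-1}(U_i)$.

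First I would prove (i)$\Rightarrow$(iii). Given $1=h_1+\dots+h_n$, the principal opens $D(h_i)$ cover $\Spec R$. Iterating the maximally-disconnected hypothesis, I would separate the finitely many closed points of $\Spec R$ by a clopen decomposition refining this cover; since every point specializes to a closed point and opens are closed under generalization, a clopen decomposition separating the closed points already refines $\{D(h_i)\}$. Writing $\Spec R=V_1\amalg\dots\amalg V_n$ with $V_i\subset D(h_i)$ clopen gives orthogonal idempotents $e_1,\dots,e_n$ with $1=e_1+\dots+e_n$ and $e_i$ supported on $V_i\subset D(h_i)$, whence $e_i\in(h_i)$ (as $e_i$ vanishes outside $D(h_i)$, a power of $h_i$ divides $e_i$, and idempotency upgrades this to $e_i\in(h_i)$). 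The subtlety here, which I expect to be the \emph{main obstacle}, is the bookkeeping that turns the $n$-fold separation of \emph{all} closed points into a single decomposition subordinate to the given cover; the hypothesis only separates \emph{two} points at a time, so this requires a finiteness argument showing $\Specmax R$ is compact (as a closed construction inside the quasi-compact $\Spec R$) and an inductive merging of pairwise separations into one common refinement.

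Next, (iii)$\Rightarrow$(ii). Let $x\in X(R)$ correspond to $\Spec R\to X$, and set $W_i=x^{-1}(U_i)$, an open cover of $\Spec R$. Choosing functions so that $\Spec R=\bigcup D(h_i)$ with $D(h_i)\subset W_i$ yields $1=h_1+\dots+h_n$; applying (iii) produces orthogonal idempotents $e_i\in(h_i)$ summing to $1$, hence a decomposition $\Spec R=\coprod\Spec R_i$ where $R_i=R_{e_i}$ and the $i$-th piece maps into $U_i$. This is exactly the data of a point of $U(R)=\coprod_i U_i(R)$ lifting $x$, so $\Psi_R$ is surjective.

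Finally, (ii)$\Rightarrow$(i). Given two closed points $\fm_1,\fm_2$ of $\Spec R$, I would build a concrete $k$-scheme $X$ whose covering exhibits the failure of surjectivity unless the points can be separated: take $X$ glued from two copies of $\Spec R$ (or use $\P^1$) so that the identity point $\id:\Spec R\to X$ lifts to $U(R)$ precisely when $\Spec R$ admits a clopen decomposition separating $\fm_1$ and $\fm_2$ subordinate to the two charts. Since (ii) forces $\Psi_R$ to be surjective, the lift exists, producing the required decomposition $\Spec R=U_1\amalg U_2$ separating $\fm_1$ from $\fm_2$, which is exactly maximal disconnectedness. I would close by noting all three implications are now in place, completing the cycle.
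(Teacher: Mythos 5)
Your cycle (i)$\Rightarrow$(iii)$\Rightarrow$(ii)$\Rightarrow$(i) reorders the paper's cycle (i)$\Rightarrow$(ii)$\Rightarrow$(iii)$\Rightarrow$(i), and the two outer steps are workable with repairs. In (iii)$\Rightarrow$(ii): you may need several principal opens inside a single $W_i$, so allow repeated indices (the paper does the same relabelling); and (iii) does \emph{not} produce orthogonal idempotents when $n\geq 3$ --- in $\F_2$ one has $1=1+1+1$ with all $e_i=1$ --- so you must disjointify, e.g.\ replace $D(e_j)$ by $V_j=D(e_j)\setminus\bigcup_{l<j}D(e_l)$, which is still clopen and contained in $D(h_j)$ (here $e_j\in(h_j)$ gives $D(e_j)\subset D(h_j)$). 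Also the identification $U(R)=\coprod_i U_i(R)$ is false, and importantly so: a point of $(\coprod U_i)(R)$ is a clopen decomposition of $\Spec R$ together with maps of the pieces to the $U_i$, not a point of a single $U_i(R)$; the theorem exists precisely because these two sets differ. Your (ii)$\Rightarrow$(i) does work if instantiated carefully, for example with $X$ obtained by gluing two copies of $\Spec R$ along $\Spec R\setminus\{\fm_1,\fm_2\}$ and the point $p\colon\Spec R\to X$ glued from the inclusions of $\Spec R\setminus\{\fm_2\}$ and $\Spec R\setminus\{\fm_1\}$ into the two charts: any lift of $p$ yields clopen $V_1\amalg V_2=\Spec R$ with $\fm_1\in V_1$ and $\fm_2\in V_2$. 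That is a genuine shortcut compared with the paper, which instead derives (i) from (iii) via comaximality ($1=h_1+h_2$ with $h_i\in\fm_i$). Note that the naive choice of charts $\Spec R\setminus\{\fm_2\}$ and $\Spec R\setminus\{\fm_1\}$ on $X=\Spec R$ itself is not available, since these opens need not be affine.

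The genuine gap is in (i)$\Rightarrow$(iii), which is exactly where the paper spends most of its proof. Your plan to ``separate the finitely many closed points of $\Spec R$'' starts from a false premise: $\Specmax R$ is in general infinite (e.g.\ $R=\mathbb{Z}$, or an ad\`ele ring, the case the paper cares about). You then appeal to compactness of $\Specmax R$ ``as a closed construction inside $\Spec R$'', but $\Specmax R$ is generally \emph{not} closed in $\Spec R$ (already for $\mathbb{Z}$ its closure contains the generic point); it is quasi-compact, but for the reason you use elsewhere, namely that every point specializes to a closed point and opens are stable under generalization. Most importantly, the mechanism that converts the hypothesis --- which separates only \emph{two closed points} at a time --- into a clopen decomposition subordinate to $\{D(h_i)\}$ is never given, and it is the mathematical core of the theorem. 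What is needed is a compactness bootstrap from points to closed sets: to find a clopen neighbourhood of a closed point $x$ inside $D(h_i)$, one must separate $x$ from the closed set $V(h_i)$ by choosing, for every closed point $y\in V(h_i)$, a clopen separation of $x$ from $y$; observing that the clopens around the various $y$ cover $V(h_i)$ (closed points plus generalization again); extracting a finite subcover by quasi-compactness of the closed subset $V(h_i)$; and intersecting the finitely many resulting clopen neighbourhoods of $x$. The paper runs this bootstrap twice (point against closed set, then closed set against closed set) and finishes by induction on $n$; in your formulation one could instead finish after the first bootstrap by covering $\Specmax R$ with finitely many of the clopens $W_x$, disjointifying, and grouping by index $i$. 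Either way, calling this step ``bookkeeping'' and naming compactness is not yet a proof of it; as written, your (i)$\Rightarrow$(iii) is a restatement of the difficulty rather than a resolution of it.
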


\begin{proof}
  We show that \eqref{item1} implies \eqref{item2}. Let $X$ be a $k$-scheme and $\{U_i\}$ a finite affine open covering. Let $U=\coprod U_i$ and $\Psi:U\to X$ the induced morphism. A point of $X(R)$ is a morphism $\alpha:\Spec R\to X$. We want to show that $\alpha$ factors through $U$.

 There exists a finite affine open covering $\{V_j\}$ of $\Spec R$ and a map $j\mapsto i(j)$ between the indices of the $V_j$ and the $U_i$, respectively, such that $\alpha$ restricts to morphisms $\alpha:V_j\to U_{i(j)}$ for all $j$. After relabelling indices and counting the $U_i$ multiple times if necessary, we might assume that $i=j$. We claim that we can refine the covering $\{V_i\}$ to an affine open covering $\{V_i'\}$ of $\Spec R$ such that $\Spec R=\coprod V_i'$. Once we know this, we can conclude that $\alpha:\Spec R\to X$ factors through $\coprod V_i\to \coprod U_i=U$.

 We prove the claim by induction on the number $n$ of open subsets $V_i$ of $\Spec R$ where we allow $X$ to vary. For $n=1$, there is nothing to prove. We proceed with the case $n=2$, which is the critical step in our induction. 
 
 In this case, we have affine open coverings $\Spec R=V_1\cup V_2$ and $X=U_1\cup U_2$ and restrictions $\alpha:V_i\to U_i$ for $i=1,2$. Let $Z_1$ and $Z_2$ be the respective complements of $V_1$ and $V_2$ in $\Spec R$, which are Zariski closed subsets. We have to show that there exist neighbourhoods $Z_i'$ of $Z_i$ in $\Spec R$ such that $Z=Z'_1\amalg Z'_2$. If both $Z_1$ and $Z_2$ contain a single closed point of $\Spec R$, then this follows from \eqref{item1} by the definition of a ring with maximally disconnected topological spectrum.

 Assume that $Z_1$ contains a single closed point $x$, and $Z_2$ is an arbitrary closed subset of $\Spec R$. Then there exist neighbourhoods $Z_{1,x,y}'$ of $x$ and $Z_{2,x,y}'$ of $y$ with $\Spec R=Z_{1,x,y}'\amalg Z_{2,x,y}'$ for every closed point $y\in Z_2$. Since the closed subset $Z_2$ of the compact space $\Spec R$ is compact, $Z_2$ is covered by finitely many of the $Z'_{2,x,y}$, say, by $Z'_{2,x,y_k}$ for $k=1,\dotsc,r$. Then $Z'_{1,x}=\bigcap Z'_{1,x,y_k}$ is a neighbourhood of $x$ and $Z'_{2,x}=\bigcup Z'_{2,x,y_k}$ is a neighbourhood of $Z_2$ and $\Spec R=Z'_{1,x}\amalg Z'_{2,x}$ as desired.

 We consider the general case of two closed subsets $Z_1$ and $Z_2$ of $\Spec R$. Then there are neighbourhoods $Z'_{1,x}$ of $x$ and $Z'_{2,x}$ of $Z_2$ with $Z=Z'_{1,x}\amalg Z'_{2,x}$ for every closed point $x\in Z_1$. Since $Z_1$ is compact, we can cover it with finitely many of $Z'_{1,x}$, say, with $Z'_{1,x_l}$ for $l=1,\dotsc,s$. Then  $Z'_{1}=\bigcup Z'_{1,x_l}$ is a neighbourhood of $Z_1$ and $Z'_{2}=\bigcap Z'_{2,x_l}$ is a neighbourhood of $Z_2$ and $\Spec R=Z'_{1}\cup Z'_{2}$ as desired. This completes the case $n=2$.

 Let $n>2$. Assume that the indices of the $V_i$ are $i=1,\dotsc,n$. We have proven that the covering $V_1\cup V_{\geq2}$ with $V_{\geq2}=V_2\cup\dotsb\cup V_n$ has a refinement $V'_1\subset V_1$ and $V'_{\geq2}\subset V_{\geq2}$ such that $\Spec R=V'_1\amalg V'_{\geq2}$. By the induction hypothesis, there is a refinement $\{V_i''\}$ for the covering $\{V_2,\dotsc, V_n\}$ of $V_{\geq2}$ such that $V_{\geq2}=V_2''\amalg\dotsb\amalg V_n''$. If we define $V'_i=V''_i\cap V'_{\geq2}$, then $X=V_1'\amalg\dotsb\amalg V_n'$ as desired. This shows that \eqref{item2} follows from \eqref{item1}.

 We continue with the implication \eqref{item2} to \eqref{item3}. An equality $1=h_1+\dotsc h_n$ corresponds to the covering of $X=\Spec R$ by principal opens $U_{i}=\Spec R[h_i^{-1}]$. Let $U=\coprod U_i$ and $\Psi:U\to X$ the induced morphism. The identity map $\id:\Spec R\to \Spec R$ is a point of $X(R)$. By \eqref{item2}, it factors through a morphism $\Spec R\to U$, which is only possible if there exists a refinement $\{V_i\}$ of $\{U_i\}$ such that $\Spec R=\coprod V_i$. If $R_i$ is the coordinate ring of $V_i$, then we have that $R=\prod R_i$. If we denote the image of $1$ under $R_i\to R$ by $e_i$, which is an idempotent element of $R$, then we have $V_i=\Spec R_i=\Spec R[e_i^{-1}]$. Since $X=\coprod V_i$, we have $1=e_1+\dotsb+e_n$ as desired. This shows \eqref{item3}.

 We conclude with the implication \eqref{item3} to \eqref{item1}. Consider two closed points $x$ and $y$ in $Z=\Specmax R$, which are maximal ideals of $R$. We have to find respective neighbourhoods $Z_1$ and $Z_2$ with $Z=Z_1\amalg Z_2$. To do so, we consider two elements $h_1\in x$ and $h_2\in y$ with $1=h_1+h_2$. By \eqref{item3}, there are idempotent elements $e_1\in (h_1)$ $e_2\in (h_2)$ with $1=e_1+e_2$. Since $e_1\in x$ and $e_2 \in y$, the principal open subsets $Z_i'=\Specmax R[e_i^{-1}]$ ($i=1,2$) are neighbourhoods of $x$ and $y$, respectively. Since $1=e_1+e_2$, we have $Z=Z'_1\amalg Z'_2$. This shows that $\Specmax R$ is totally disconnected and finishes the proof of the theorem.
\end{proof}

\section{The adelic topology}
\label{section: the adelic topology}

\noindent
In this section, we shall show that the fine topology coincides with the adelic and the $S$-adelic topology when $R$ is the ad\`ele ring $\AA$ of a global field $k$ or the $S$-ad\`ele ring $\AA_S$, respectively.

Let $S$ be a finite set of places of $k$ containing all the archimedean ones and let $\cO_S$ be the $S$-integers of $k$. We consider a finite type $\cO_S$-scheme $X_S$ and a finite affine open covering $\{U_{S,i}\}$. Let $U_S=\coprod U_{S,i}$ and $\Psi_S:U_S\to X_S$ the induced morphism. 

\begin{lemma}\label{lemma: U to X is S-adelic open}
 The induced map $\Psi_{S,\AA_S}:U_S(\AA_S)\to X_S(\AA_S)$ is continuous, surjective and open with respect to the $S$-adelic topology.
\end{lemma}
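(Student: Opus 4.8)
The plan is to exploit the product decomposition that defines the $S$-adelic topology and thereby reduce all three assertions to the corresponding statements at each place, where the local rings $k_v$ (for $v\in S$) and $\cO_v$ (for $v\notin S$) are local Hausdorff rings with open unit group. First I would record that, since $X_S$ and $U_S=\coprod_i U_{S,i}$ are both of finite type over $\cO_S$, the $S$-adelic topology identifies
$X_S(\AA_S)=\prod_{v\in S}X_S(k_v)\times\prod_{v\notin S}X_S(\cO_v)$ and
$U_S(\AA_S)=\prod_{v\in S}U_S(k_v)\times\prod_{v\notin S}U_S(\cO_v)$
as products of strong topologies, and that this decomposition is functorial in the scheme argument. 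Consequently $\Psi_{S,\AA_S}$ is identified with the product map $\prod_{v\in S}\Psi_{S,k_v}\times\prod_{v\notin S}\Psi_{S,\cO_v}$ of the maps induced by $\Psi_S$ at the various places.

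Next I would treat a single place. By the introduction each $k_v$ and each $\cO_v$ is a local Hausdorff ring with open unit group, so by Corollary \ref{cor: fine=strong} the fine topology agrees with the strong topology and such a ring satisfies (F1)--(F5); in particular it satisfies (F4) by Proposition \ref{prop: open unit group}. Writing $R$ for such a ring, continuity of $\Psi_{S,R}$ is immediate from functoriality of the strong topology. For surjectivity I would use that $R$ is local: a point of $X_S(R)$ is a morphism $\Spec R\to X_S$, and by Lemma \ref{lemma: characterization of local rings} it factors through some $U_{S,i}$, hence lifts to $U_{S,i}(R)\subset U_S(R)$. For openness I would first note that, as $\Spec R$ is connected, $U_S(R)=\coprod_i U_{S,i}(R)$ as topological spaces, since each $U_{S,i}(R)$ is open in $U_S(R)$ by (F4) and the summands are pairwise disjoint and cover; moreover $\Psi_{S,R}$ restricts on the $i$-th summand to the map $\iota_{i,R}$ induced by the open immersion $\iota_i\colon U_{S,i}\hookrightarrow X_S$. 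Since each $\iota_{i,R}$ is an open embedding by (F4), for any open $W\subseteq U_S(R)$ the image $\Psi_{S,R}(W)=\bigcup_i\iota_{i,R}(W\cap U_{S,i}(R))$ is a union of open sets, hence open.

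Finally I would reassemble the factors. A product of continuous maps is continuous and a product of surjective maps is surjective, so $\Psi_{S,\AA_S}$ is continuous and surjective. For openness I would invoke the fact that a product $\prod_i f_i$ of maps that are all \emph{both} open and surjective is again open: the image of a basic open $\prod_i W_i$ (with $W_i$ equal to the whole factor for all but finitely many $i$) is $\prod_i f_i(W_i)$, whose factors are open and equal the whole target for all but finitely many $i$ by surjectivity, hence a basic open; as every open set is a union of basic opens and images commute with unions, $\prod_i f_i$ is open. Applying this to $\Psi_{S,\AA_S}=\prod_v\Psi_{S,v}$, whose factors were just shown to be open and surjective, gives openness.

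I expect the main obstacle to be the openness claim, in two places: first the per-place openness, which hinges on the disjoint-union description $U_S(R)=\coprod_i U_{S,i}(R)$ for local $R$ together with (F4); and second the passage from openness at each place to openness of the infinite product over $v\notin S$, which is precisely where one must combine openness \emph{and} surjectivity of the factors and use the special form of basic opens in a product topology.
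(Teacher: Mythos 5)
Your proof is correct and follows the same overall strategy as the paper's: identify $\Psi_{S,\AA_S}$ with the product over all places of the maps $\Psi_{S,R_v}$, where $R_v=k_v$ for $v\in S$ and $R_v=\cO_v$ for $v\notin S$, establish continuity, surjectivity and openness at each place using that $R_v$ is a local Hausdorff ring with open unit group (Corollary \ref{cor: fine=strong}), and then pass to the product topology, which is by definition the $S$-adelic topology. The differences lie in the supporting citations and the level of detail, and they work in your favor. For the per-place statements the paper appeals to Theorem \ref{thm: characterization of rings with maximally disconnected spectrum}, which strictly speaking only yields surjectivity; your derivation of surjectivity from locality (Lemma \ref{lemma: characterization of local rings}) and of openness from (F4) together with the decomposition $U_S(R_v)=\coprod_i U_{S,i}(R_v)$ supplies the argument actually needed there. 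Likewise, the paper simply asserts that the product of the open maps $\Psi_{S,R_v}$ is open, whereas you correctly isolate the point that an infinite product of open maps is open only in combination with surjectivity of the factors, so that the image of a basic open set is again basic, being the whole target in all but finitely many coordinates. In short, your write-up fills in precisely the two steps the paper leaves implicit.
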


\begin{proof}
 By the functoriality of the $S$-adelic topology, $\Psi_{\AA}$ is continuous. By Theorem \ref{thm: characterization of rings with maximally disconnected spectrum}, $\Psi_{\AA_S}$ is surjective.

 For a place $v$ of $k$, we define $R_v$ as the completion $k_v$ of $k$ at $v$ if $v\in S$ and as the integers $\cO_v$ of $k_v$ if $v\notin S$. Since $R_v$ is a local Hausdorff ring with open unit group, we know by Corollary \ref{cor: fine=strong} that the fine topology coincides with the strong topology for $X_S(R_v)$. By Theorem \ref{thm: characterization of rings with maximally disconnected spectrum}, $\Psi_{S,R_v}:U_S(R_v)\to X_S(R_v)$ is continuous, surjective and open in the fine topology, which coincides with the strong topology in this case. Therefore the product map $\Psi_{S,\AA_S}:\prod U_S(R_v)\to \prod X_S(R_v)$ is open with respect to the product topologies, which are, by definition, the $S$-adelic topologies for $U_S(\AA_S)$ and $X(\AA_S)$, respectively.
\end{proof}

Let $X$ be a $k$-scheme of finite type and $\{U_i\}$ a finite affine open covering of $X$. Let $U=\coprod U_i$ and $\Psi:U\to X$ the induced morphism.

\begin{lemma}\label{lemma: U to X is adelic open}
 The induced map $\Psi_{\AA}:U(\AA)\to X(\AA)$ is continuous, surjective and open with respect to the adelic topology.
\end{lemma}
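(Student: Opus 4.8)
The plan is to deduce the lemma from its $S$-adelic counterpart, Lemma \ref{lemma: U to X is S-adelic open}, by passing to the colimit $X(\AA)=\colim_{S\subseteq S'} X_{S'}(\AA_{S'})$. First I would fix a finite set of places $S_0$ and an $\cO_{S_0}$-model $X_{S_0}$ of $X$ carrying a finite affine open covering $\{U_{S_0,i}\}$ that restricts to $\{U_i\}$ over $k$; such a model and covering exist by spreading out, since $X$ is of finite type and the covering is finite (compare the existence of models noted in the introduction). For every $S'\supseteq S_0$, base extension yields $X_{S'}$, the covering $\{U_{S',i}\}$, the disjoint union $U_{S'}=\coprod_i U_{S',i}$, and an induced morphism $\Psi_{S'}:U_{S'}\to X_{S'}$ modelling $\Psi$. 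These assemble into commutative squares linking $\Psi_{S',\AA_{S'}}$ with $\Psi_\AA$ through the canonical structure maps $\lambda_{S'}:U_{S'}(\AA_{S'})\to U(\AA)$ and $\mu_{S'}:X_{S'}(\AA_{S'})\to X(\AA)$ of the two colimits.

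Continuity of $\Psi_\AA$ is then immediate from the functoriality of the adelic (colimit) topology, or equivalently from the levelwise continuity of Lemma \ref{lemma: U to X is S-adelic open} together with the commutative squares. Surjectivity follows from Theorem \ref{thm: characterization of rings with maximally disconnected spectrum}: recall that the ad\`ele ring $\AA$ is with maximally disconnected spectrum, so condition \eqref{item2} of that theorem gives that $\Psi_\AA$ is surjective for every finite affine open covering.

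The substantive point is openness. Here I would first establish that each structure map $\mu_{S'}$ is an open map. This rests on the sub-claim that the transition maps $X_{S'}(\AA_{S'})\to X_{S''}(\AA_{S''})$ for $S'\subseteq S''$ are open topological embeddings: componentwise they are identities except on the factors indexed by $v\in S''\setminus S'$, where they are the inclusions $X_{S}(\cO_v)\hookrightarrow X(k_v)$, and these are open embeddings in the strong topology because $\cO_v$ is open in $k_v$ and $X_S(\cO_v)=\bigcup_i U_{S,i}(\cO_v)$ is covered by sets that are open in $X(k_v)$ (each $U_{S,i}(\cO_v)$ is open in $U_i(k_v)$, which is open in $X(k_v)$ by the strong-topology patching of Corollary \ref{cor: fine=strong}). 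Granting that the transition maps are open embeddings, a standard filtered-colimit argument shows each $\mu_{S'}$ is itself an open embedding, injectivity of the later transition maps upgrading the set-theoretic preimage $\mu_{S''}^{-1}(\mu_{S'}(V))$ to the open image of $V$. With $\mu_{S'}$ open, openness of $\Psi_\AA$ is formal: given an open $\widetilde W\subseteq U(\AA)$, write $\widetilde W=\bigcup_{S'\supseteq S_0}\lambda_{S'}(\widetilde W_{S'})$ with $\widetilde W_{S'}=\lambda_{S'}^{-1}(\widetilde W)$ open, and compute
\[
 \Psi_\AA(\widetilde W) \ = \ \bigcup_{S'\supseteq S_0} \mu_{S'}\bigl(\Psi_{S',\AA_{S'}}(\widetilde W_{S'})\bigr)
\]
using commutativity of the squares. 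Each $\Psi_{S',\AA_{S'}}(\widetilde W_{S'})$ is open by Lemma \ref{lemma: U to X is S-adelic open}, each $\mu_{S'}$ carries opens to opens, and a union of open sets is open; hence $\Psi_\AA(\widetilde W)$ is open.

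The main obstacle is precisely the openness of the structure maps $\mu_{S'}$, i.e.\ verifying that the colimit defining the adelic topology is taken along open embeddings; once this is in place the remaining argument is purely formal. A secondary care-point is the spreading-out step producing a single model $X_{S_0}$ whose affine covering simultaneously models $\{U_i\}$, so that the morphisms $\Psi_{S'}$ form a compatible system and the two colimits are genuinely linked by the commutative squares used above.
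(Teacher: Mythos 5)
Your strategy is sound and it is genuinely different from the paper's proof. Both arguments obtain continuity from functoriality and surjectivity from Theorem \ref{thm: characterization of rings with maximally disconnected spectrum}, and both reduce openness to Lemma \ref{lemma: U to X is S-adelic open}; but the paper argues by \emph{pulling back}, fixing one model $\Psi_S:U_S\to X_S$ and checking that $\iota_{X,S'}^{-1}\bigl(\Psi_\AA(Z)\bigr)$ is $S'$-adelic open for every $S'$, by identifying this preimage with the image under $\Psi_{S,\AA_{S'}}$ of the preimage of $Z$. You argue by \emph{pushing forward}: you prove that the structure maps $\mu_{S'}:X_{S'}(\AA_{S'})\to X(\AA)$ are open and then write $\Psi_\AA(\widetilde W)$ as a union of pushed-forward opens. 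Your route needs a strictly stronger input --- openness, not merely continuity, of the transition maps --- and you establish it correctly: $U_{S,i}(\cO_v)$ is open in $U_i(k_v)$ since $\cO_v^n$ is open in $k_v^n$, $U_i(k_v)$ is open in $X(k_v)$ by Corollary \ref{cor: fine=strong}, and $X_S(\cO_v)=\bigcup_i U_{S,i}(\cO_v)$ by locality of $\cO_v$ (Lemma \ref{lemma: characterization of local rings}). This extra work buys something real: it confronts head-on the level-mismatch phenomenon (a point $z\in Z$ above a given $x_{S'}$ may exist only at a strictly larger level $S''$, because $z$ may sit in a chart in which it fails to be integral at some $v\in S''\setminus S'$), which is precisely the point at which the paper's image-equals-preimage identity requires care.

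There is, however, one step that fails as stated: the sub-claim that the transition maps, and hence the $\mu_{S'}$, are open topological \emph{embeddings}, together with the appeal to injectivity in your filtered-colimit argument. Injectivity of $X_S(\cO_v)\to X(k_v)$ is a separatedness statement, while the lemma is asserted for arbitrary $k$-schemes of finite type; for the affine line with doubled origin, the two $\cO_v$-points with generic fibre $t=\pi_v$ whose closed points hit the two different origins have the same image in $X(k_v)$, so the transition maps and the $\mu_{S'}$ are not injective. Fortunately your argument never needs ``embedding'', only ``open map'', and openness survives without injectivity on both counts. For the transition components: any open $V\subseteq X_S(\cO_v)$ is the union of the sets $V\cap U_{S,i}(\cO_v)$, and on each chart the map restricts to an open embedding into $X(k_v)$, so the image of $V$ is a union of open sets. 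For $\mu_{S'}$: writing $\tau_{S',S''}$ for the transition maps and using that in a filtered colimit two points have equal images if and only if they agree at some later stage, one has, for $S''\supseteq S'$,
\[
 \mu_{S''}^{-1}\bigl(\mu_{S'}(V)\bigr) \ = \ \bigcup_{S'''\supseteq S''} \tau_{S'',S'''}^{-1}\bigl(\tau_{S',S'''}(V)\bigr),
\]
which is open by continuity of the $\tau_{S'',S'''}$ and openness of the $\tau_{S',S'''}$ (general $S''$ reduce to this case via $S'\cup S''$). With these two replacements your proof covers the non-separated case and is complete.
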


\begin{proof}
 By the functoriality of adelic topologies, $\Psi_\AA$ is continuous. By Theorem \ref{thm: characterization of rings with maximally disconnected spectrum}, $\Psi_\AA$ is surjective.

 Let $Z$ be an open subset of $U(\AA)$. We want to show that $W=\Psi_\AA(Z)$ is open in $X(\AA)$. Choose a finite subset $S$ of places containing all the archimedean ones and an $\cO_S$-model $\Psi_S:U_S\to X_S$ of $\Psi:U\to X$ where $U_S$ and $X_S$ are finite type $\cO_S$-models of $U$ and $X$, respectively. For finite sets $S'$ of places containing $S$, we denote by $\iota_{U,S'}:U_S(\AA_{S'})\to U(\AA)$ and $\iota_{X,S'}:X_S(\AA_{S'})\to X(\AA)$ the canonical maps induced by $\AA_{S'}\to\AA$.

 By the definition of the adelic topology, $W$ is adelic open in $U(\AA)$ if and only if $W_{S'}=\iota_{U,S'}^{-1}(W)$ is $S'$-adelic open in $U_S(\AA_{S'})$ for all finite $S'$ containing $S$. By Lemma \ref{lemma: U to X is S-adelic open}, the image $Z_{S'}=\Psi_{S,\AA_{S'}}(W_{S'})$ is $S'$-adelic open in $X_S(\AA_{S'})$ for all $S'$. Since $Z_{S'}=\iota_{X,S'}^{-1}(W)$, we conclude that $W$ is adelic open in $X(\AA)$. This completes the proof of the lemma.
\end{proof}

\begin{thm} \label{thm: fine=adelic}
 Let $k$ be a global field, $S$ a finite set of places containing all the archimedean ones and $\cO_S$ the $S$-integers.
 \begin{enumerate}
  \item Let $\AA_S$ be the $S$-ad\`eles of $k$ and $X_S$ a $\cO_S$-scheme of finite type. Then the fine topology and the $S$-adelic topology for $X_S(\AA_S)$ coincide.
  \item Let $\AA$ be the ad\`eles of $k$ and $X$ a $k$-scheme of finite type. Then the fine topology and the adelic topology for $X(\AA)$ coincide.
 \end{enumerate}
\end{thm}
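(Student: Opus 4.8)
The plan is to establish each statement by two opposite inclusions of topologies, proving (i) in full and then obtaining (ii) by the identical argument with $\AA_S$ replaced by $\AA$. Fix a finite affine open covering $\{U_{S,i}\}$ of $X_S$ and set $U_S=\coprod U_{S,i}$. The observation that drives the whole proof is that a \emph{finite} coproduct of affine $\cO_S$-schemes of finite type is again an affine $\cO_S$-scheme of finite type (its coordinate ring being the finite product $\prod_i \cO(U_{S,i})$); hence Corollary \ref{cor: affine=fine=strong=adelic for affine schemes} applies to $U_S$ and tells us that the fine and the $S$-adelic topologies already coincide on $U_S(\AA_S)$.

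First I would show that the fine topology is at least as fine as the $S$-adelic topology. Let $W\subset X_S(\AA_S)$ be $S$-adelic open; to see that it is fine open I must check, for each morphism $\alpha:V\to X_S$ from an affine $\cO_S$-scheme $V$, that $\alpha_R^{-1}(W)$ is open in $V(\AA_S)$. Since $\cO_S$ is Noetherian and $X_S$ is of finite type, hence of finite presentation, Lemma \ref{lemma: morphisms from affine to finite type factor through an affine of finite type} factors $\alpha$ as $V\to V'\to X_S$ with $V'$ affine of finite type. On $V'(\AA_S)$ the affine (= fine) and $S$-adelic topologies agree by Corollary \ref{cor: affine=fine=strong=adelic for affine schemes}, and the $S$-adelic topology is functorial, so the preimage of $W$ in $V'(\AA_S)$ is $S$-adelic, hence affine, open; pulling back further along $V\to V'$, which is continuous for the affine topology by Lemma \ref{lemma: the affine topology is functorial}, shows $\alpha_R^{-1}(W)$ is open. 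Thus $W$ is fine open.

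For the reverse inclusion I would exploit the open surjection of Lemma \ref{lemma: U to X is S-adelic open}. Let $W\subset X_S(\AA_S)$ be fine open. By functoriality of the fine topology (Proposition \ref{prop: fine topology is functorial}) the map $\Psi_{S,\AA_S}:U_S(\AA_S)\to X_S(\AA_S)$ is continuous for the fine topology, so $\Psi_{S,\AA_S}^{-1}(W)$ is fine open in $U_S(\AA_S)$; by the opening observation this set is then $S$-adelic open. Since $\Psi_{S,\AA_S}$ is surjective and $S$-adelic open (Lemma \ref{lemma: U to X is S-adelic open}), we recover $W=\Psi_{S,\AA_S}\bigl(\Psi_{S,\AA_S}^{-1}(W)\bigr)$ as an $S$-adelic open set. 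This proves (i). Statement (ii) follows line for line, using Corollary \ref{cor: affine=fine=strong=adelic for affine schemes} and Lemma \ref{lemma: U to X is adelic open} in place of their $S$-adelic counterparts and $\Psi_\AA$ in place of $\Psi_{S,\AA_S}$.

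The proof has no real obstacle once the two input lemmas are available; the one point worth highlighting is the reverse inclusion, where all the genuine content has been pre-loaded. Packaging the finite covering into the single affine scheme $U_S$ is what makes Corollary \ref{cor: affine=fine=strong=adelic for affine schemes} applicable, and the surjectivity and openness of $\Psi$ — which is precisely where the maximally-disconnected-spectrum property of adele rings was spent via Theorem \ref{thm: characterization of rings with maximally disconnected spectrum} — is what lets us descend the agreement of topologies from $U_S(\AA_S)$ down to $X_S(\AA_S)$.
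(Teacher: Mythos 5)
Your proposal is correct and follows essentially the same route as the paper's own proof: one inclusion via Lemma \ref{lemma: morphisms from affine to finite type factor through an affine of finite type} and Corollary \ref{cor: affine=fine=strong=adelic for affine schemes}, the other via the open surjection $\Psi$ of Lemma \ref{lemma: U to X is S-adelic open} (resp.\ Lemma \ref{lemma: U to X is adelic open}), with the only cosmetic difference that you write out part (i) and the paper writes out part (ii). Your explicit remark that the finite disjoint union $U_S$ is itself affine of finite type, so that the corollary applies to it, is a point the paper leaves implicit.
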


\begin{proof}
 Since the proofs of \eqref{item1} and \eqref{item2} are completely analogous, we present only the proof of \eqref{item2}. We consider an adelic open subset $W$ of $X(\AA)$ and want to show that for every morphism $\alpha:U\to X$ from an affine $k$-scheme $U$ to $X$, the inverse image $Z=\alpha_\AA^{-1}(W)$ is fine open in $U(\AA)$. By Lemma \ref{lemma: morphisms from affine to finite type factor through an affine of finite type}, we can restrict ourselves to affines $U$ of finite type over $k$. Since the adelic topology is functorial in finite type $k$-schemes, $Z$ is adelic open in $U(\AA)$, and by Corollary \ref{cor: affine=fine=strong=adelic for affine schemes}, it is affine open. This shows that $W$ is fine open in $X(\AA)$.

 Assume conversely that $W$ is fine open in $X(\AA)$. Choose a finite affine open covering $\{U_i\}$ of $X$, let $U=\coprod U_i$ be the disjoint union and $\Psi:U\to X$ the induced morphism. Then $Z=\Psi_\AA^{-1}(W)$ is fine open in $U(\AA)$, and therefore adelic open by Corollary \ref{cor: affine=fine=strong=adelic for affine schemes}. By Lemma \ref{lemma: U to X is adelic open}, $W=\Psi_\AA(Z)$ is adelic open in $X(\AA)$. This finishes the proof of \eqref{item2}.
\end{proof}

\begin{cor}
 Both $\AA_S$ and $\AA$ satisfy axiom (F6) for the class of all $k$-schemes of finite type.
\end{cor}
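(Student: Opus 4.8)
The plan is to combine the identification of the fine topology with the adelic topologies, namely Theorem \ref{thm: fine=adelic}, with the surjectivity and openness of $\Psi$ already established for the adelic topologies in Lemmas \ref{lemma: U to X is adelic open} and \ref{lemma: U to X is S-adelic open}. Since axiom (F6) is a statement about the fine topology, once both the source and the target of $\Psi_{\AA}$ carry the adelic topology and this coincides with the fine topology, the assertion transports directly.

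I would treat the adelic case first. Let $X$ be a $k$-scheme of finite type with a finite affine open covering $\{U_i\}_{i\in I}$, set $U=\coprod_{i\in I}U_i$, and let $\Psi:U\to X$ be the induced morphism. The one point worth recording is that $U$ is again of finite type over $k$: each $U_i$ is an open subscheme of the finite type scheme $X$ and hence of finite type, and a finite disjoint union of finite type $k$-schemes is of finite type. Consequently Theorem \ref{thm: fine=adelic} applies to both $X$ and $U$, so that the fine topology coincides with the adelic topology on each of $X(\AA)$ and $U(\AA)$.

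Now I would invoke Lemma \ref{lemma: U to X is adelic open}, which states precisely that $\Psi_{\AA}:U(\AA)\to X(\AA)$ is surjective and open with respect to the adelic topology. Under the identification of the adelic topology with the fine topology on both sides, this says that $\Psi_{\AA}$ is surjective and open in the fine topology, which is exactly axiom (F6) for $R=\AA$.

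The $S$-adelic case is entirely parallel: for a finite type $\cO_S$-scheme $X_S$ with a finite affine open covering $\{U_{S,i}\}$ and $U_S=\coprod U_{S,i}$, the disjoint union $U_S$ is again of finite type, Theorem \ref{thm: fine=adelic} identifies the fine and $S$-adelic topologies on $U_S(\AA_S)$ and $X_S(\AA_S)$, and Lemma \ref{lemma: U to X is S-adelic open} supplies the surjectivity and openness of $\Psi_{S,\AA_S}$ in the $S$-adelic topology; transporting along this identification yields (F6) for $R=\AA_S$. There is no genuine obstacle here. The substantive content, namely the surjectivity, which ultimately rests on the adele rings having maximally disconnected spectrum (Theorem \ref{thm: characterization of rings with maximally disconnected spectrum}), and the openness, has already been established in the cited lemmas; the only thing to verify is the harmless stability of the finite type condition under finite disjoint unions, needed so that Theorem \ref{thm: fine=adelic} is applicable to $U$ and $U_S$.
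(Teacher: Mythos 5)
Your proposal is correct and follows exactly the paper's own argument: the paper likewise proves (F6) by combining Lemma \ref{lemma: U to X is S-adelic open} with part (i) of Theorem \ref{thm: fine=adelic} for $\AA_S$, and Lemma \ref{lemma: U to X is adelic open} with part (ii) for $\AA$. Your additional remark that the disjoint union $U$ is again of finite type, so that Theorem \ref{thm: fine=adelic} indeed applies to $U(\AA)$ as well as to $X(\AA)$, is a worthwhile detail that the paper leaves implicit.
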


\begin{proof}
 Axiom (F6) follows from Lemma \ref{lemma: U to X is S-adelic open} and part \eqref{item1} of Theorem \ref{thm: fine=adelic} for $\AA_S$, and from Lemma \ref{lemma: U to X is adelic open} and part \eqref{item2} of Theorem \ref{thm: fine=adelic} for $\AA$.
\end{proof}


\section{Locally compact rings}
\label{section: locally compact rings}

 \noindent
 In this concluding section, we point out that axiom (F6) is enough to ensure that a locally compact Hausdorff ring defines a functor from the category of $k$-schemes of finite type to the category of locally compact topological spaces.
  
 \begin{lemma}\label{lemma: locally compact rings with F6}
  If $R$ is a locally compact Hausdorff ring over $k$ with (F6), then $X(R)$ is locally compact for every $k$-scheme $X$ of finite type. 
 \end{lemma}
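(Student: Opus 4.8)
The plan is to reduce the statement to the affine case by means of axiom (F6), to settle the affine case by realizing $X(R)$ as a closed subspace of some $R^n$, and to transfer local compactness along an open continuous surjection by a purely topological argument. Throughout I use that, being a Hausdorff ring, $R$ satisfies (F1)--(F3) for all $k$-schemes by Proposition \ref{prop: characterization of Hausdorff rings}, and that the fine topology is functorial by Proposition \ref{prop: fine topology is functorial}. I take \emph{locally compact} in the weak sense that every point admits a compact neighbourhood; this is the appropriate notion here, since $X(R)$ need not be Hausdorff when $X$ is not separated.

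First I would settle the affine case. Let $X=\Spec A$ be an affine $k$-scheme of finite type, and write $A=k[T_1,\dotsc,T_n]/I$. The quotient map gives a closed immersion $X\to\A^n_k$, so by (F3) the induced map $X(R)\to\A^n_k(R)$ is a closed topological embedding. By (F1) and (F2) the canonical bijection $\A^n_k(R)\to R^n$ is a homeomorphism, the fibre product over $\Spec k$ being an honest product since $(\Spec k)(R)$ is a single point. As $R$ is locally compact Hausdorff, so is the finite power $R^n$, and a closed subspace of a locally compact Hausdorff space is locally compact. Hence $X(R)$ is locally compact.

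Next I would treat a general $k$-scheme $X$ of finite type. Choose a \emph{finite} affine open covering $\{U_i\}_{i\in I}$ with each $U_i=\Spec A_i$ of finite type over $k$, and set $U=\coprod_{i\in I}U_i$ with associated morphism $\Psi:U\to X$. The key observation is that the finite coproduct $U$ is again affine of finite type: indeed $U=\Spec\bigl(\prod_{i\in I}A_i\bigr)$, and a finite product of finitely generated $k$-algebras is finitely generated. Thus $U(R)$ is locally compact by the affine case. By (F6) the map $\Psi_R:U(R)\to X(R)$ is surjective and open, and it is continuous by functoriality.

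It remains to record the topological transfer principle: if $f:Y\to X$ is a continuous, open, surjective map with $Y$ locally compact, then $X$ is locally compact. Given $x\in X$, pick $y\in f^{-1}(x)$ and a compact neighbourhood $K$ of $y$, say with $y\in V\subseteq K$ for some open $V$. Then $f(K)$ is compact, $f(V)$ is open with $x\in f(V)\subseteq f(K)$, so $f(K)$ is a compact neighbourhood of $x$. Applying this to $\Psi_R$ finishes the proof. The only genuinely non-formal step is the identification of $U$ as an affine scheme of finite type, which is what lets me bypass the fact that $U(R)$ is \emph{not} the disjoint union of the $U_i(R)$ --- a morphism $\Spec R\to U$ decomposes $\Spec R$ into clopen pieces, so connectedness of $\Spec R$ would otherwise intervene; once this is in hand, the remaining steps are routine.
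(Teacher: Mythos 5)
Your proof is correct and follows essentially the same route as the paper: settle the affine case, then use the surjectivity and openness of $\Psi_R:U(R)\to X(R)$ from (F6) to push a compact neighbourhood forward. The only difference is cosmetic --- the paper cites \cite[Prop.~2.1]{Conrad12} for the affine case, whereas you derive it directly from (F1)--(F3) via a closed embedding into $R^n$, and you make explicit the (correct) observations that $U=\coprod U_i$ is itself affine of finite type and that local compactness transfers along continuous open surjections.
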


 \begin{proof}
  It is shown in \cite[Prop.\ 2.1]{Conrad12} that $X(R)$ is locally compact if $X$ is an affine $k$-scheme of finite type. For a general $k$-scheme $X$ of finite type, we choose a finite affine open covering $\{U_i\}$ and let $\Psi:U\to X$ be the associated morphism. We want to find a compact neighbourhood of a point $\alpha\in X(R)$. By (F6), there is a point $\alpha'\in U(R)$ such that $\Psi_R(\alpha')=\alpha$. By the result for affine $k$-schemes, $\alpha'$ has a compact neighbourhood $Z$, and by (F6) and the continuity of $\Psi_R$, the image $W=\Psi_R(Z)$ is a compact neighbourhood of $\alpha$ in $X(R)$.
 \end{proof}

 A natural task is to study the relation between completions of varieties and compactifications of the sets $X(R)$. It is a classical theorem that a complex variety $X$ is complete if and only if $X(\C)$ is compact in the strong topology (cf.\ \cite{Mumford90}). More generally for a local field $k$, a $k$-variety $X$ is complete if and only if for all finite field extensions $K$ of $k$, the set $X(K)$ is compact in its strong topology (cf.\ \cite{L07}). The proof in \cite{L07} can easily adopted to a proof for adelic points: a variety $X$ over a global field $k$ is complete if and only if the set $X(\AA_K)$ is compact for the ad\`eles $\AA_K$ of every finite field extension $K$ of $k$.

 We pose the following 

 \subsection*{Question}
 Let $R$ be a locally compact Hausdorff ring over $k$ with (F6). Assume that $\P^1(R)$ is compact, but that $R$ is not. Is it true that a morphism $X\to\Spec k$ is proper if and only if $X(S)$ is compact for all continuous homomorphisms $R\to S$ into a locally compact Hausdorff ring $S$ with (F6)?


\begin{small}

\bibliographystyle{plain}

\end{small}

\end{document}